\theoremstyle{plain}
\newtheorem{theo}{Theorem}[section]
\newtheorem*{theo*}{Theorem}
\newtheorem{prop}[theo]{Proposition}
\newtheorem{lemm}[theo]{Lemma}
\newtheorem{coro}[theo]{Corollary}
\theoremstyle{definition}
\DeclareMathOperator{\diff}{d}
\DeclareSymbolFont{pletters}{OT1}{cmr}{m}{sl}
\DeclareMathSymbol{s}{\mathalpha}{pletters}{`s}
\def\ba{\begin{align}}
	\def\bad{\begin{aligned}}
		\def\be{\begin{equation}}
			\def\ea{\end{align}}
		\def\ead{\end{aligned}}
	\def\ee{\end{equation}}
\def \D{\diff\! }
\def\dxi{\diff \! \xi}
\def\dx{\diff \! x}
\def\le{\leq}
\def \R{\mathbb{R}}
\numberwithin{equation}{section}
\title{Mathematical model for collective migration on a viscoelastic collagen network}
\author{Nicolas Meunier} \address[N. Meunier]{LaMME, UMR CNRS
  8071, Universit\'e d'\'Evry, Val d'Essonne, France}
\email{nicolas.meunier@univ-evry.fr}
\author{Andrei Tarfulea} \address[A. Tarfulea]{Department of Mathematics, Louisiana State University}
\email{tarfulea@lsu.edu}
\begin{document}
	
\maketitle

\begin{abstract}
  In this paper, we study a model of self-generated directional cell migration on viscoelastic substrates in the absence of apparent intrinsic polarity. This model, first proposed in \cite{Clark}, was observed numerically to manifest traveling pulse solutions for sufficiently large collagen stiffness, leading to a persistent collective migration. Here we provide a rigorous mathematical framework for the model, finding the exact stationary states and conditional traveling pulse. We also prove global well-posed in $W^{k,\infty}$ spaces, local stability of the traveling pulse for high stiffness, and exponential convergence to the stationary state for low stiffness.
\end{abstract}

	
\section{Introduction}

In \cite{Clark} the following model was proposed for self-generated directional migration of cell clusters on viscoelastic substrates in the absence of internal biochemical polarity signals:
\begin{eqnarray}\label{eq:Clark_1}
	\alpha \partial_t S(t,x) &=& - S(t,x) + \beta ^2 \partial^2_{xx} S(t,x) + \alpha \gamma \delta_{x=x_c(t)}, \quad t>0, \ x \in \R,\\
\label{eq:Clark_2}
	\dot x_c (t)&=&\frac{\D }{\D t } x_c(t) = - \eta \partial_{x} S(t,x=x_c(t)),
\end{eqnarray}	
where $\alpha, \beta, \gamma$, $\eta$ are non-negative real numbers and  $\delta_{x=a}$ is the Dirac mass in $x=a$. The motion of the cell cluster, represented by the curve $x_c(t)$, moves according to the local deformation of the collagen network, represented by the scalar $S$. This deformation spreads and dissipates through the medium, but is also generated from a source centered at the cell cluster in an isotropic and apolar manner; i.e., the equation above is symmetric in translations and in switching $x$ to $-x$.

In \cite{Clark}, heuristic results had been given concerning the existence of a traveling pulse type solution for \eqref{eq:Clark_1} -- \eqref{eq:Clark_2}. Specifically, for a threshold $\eta_0$ that depends on $\alpha$, $\beta$, and $\gamma$, traveling pulses appear spontaneously when $\eta > \eta_0$
but do not appear when $\eta < \eta_0$.

There are two difficulties in mathematically analyzing \eqref{eq:Clark_1} -- \eqref{eq:Clark_2}. The first is due to its subtle non-linear nature. The second concerns the fact that in the traveling frame, the only way to accommodate the Dirac mass is to have a jump in $\partial_x S$ at $x=0$.  That is, $\beta^2(\partial_x S(0^-) - \partial_x S(0^+)) = - \alpha \gamma $.  On the other hand, the condition on $\dot x_c$ indicates that the velocity $c$ of the traveling pulse satisfies $c = \xi \partial_x S(0)$; however, $\partial_x S(0)$ doesn't make sense because of this jump.

To overcome the second difficulty, here we study a regularized version of \eqref{eq:Clark_1} -- \eqref{eq:Clark_2} taking the form
\be\label{eq:reg}
 \left(\frac{1}{\beta ^2} + \frac{\alpha}{\beta^2} \partial_t -\partial^2_{xx}\right) S(t,x)= \frac{\alpha \gamma }{\beta^2} g_\varepsilon (x-x_c(t)), \quad t>0, \ x \in \R,
 \ee
 where 
 $g_\varepsilon$ is a Gaussian like function:
 \be\label{def:gaussian}
 g_\varepsilon (x)= \frac{1}{\varepsilon \sqrt{\pi}}e^{- \frac{x^2}{\varepsilon^2}},
 \ee
 and the position $x_c(t)$ is again given as
 \be\label{def:v_c}
 \dot x_c (t)= v_c(t) = - \eta \partial_x S(t,x=x_c(t)).
 \ee

Since 
\[
\lim_{\varepsilon\to 0} g_\varepsilon (x)= \delta_{x=0},
\]
in the distributional sense and 
\[
\hat g_\varepsilon (\xi) =  e^{-\frac{\varepsilon \xi^2}{4}} \le 1,
\] 
this justifies that the problem \eqref{eq:reg} -- \eqref{def:gaussian} -- \eqref{def:v_c} is an approximation of the problem \eqref{eq:Clark_1} -- \eqref{eq:Clark_2}. Throughout this work we try to make as few assumptions as possible about the production term $g_\varepsilon$. While the
Gaussian will satisfy all of our assumptions, we remark that the main features of $g_\epsilon$ will be its nonnegativity, symmetry about the origin,
regularity (i.e., $W^{k,\infty}$), and fast decay as $|x|\rightarrow \infty$.

We now summarize our main results. First, the model is globally well-posed in $L^\infty$ based spaces, with smoothness controlled by the
smoothness of $g$ (and for uniform results also the smoothness of the initial data $S_0$).

\begin{theo}\label{THM1}
Let $k \geq 2$ and assume that $S_0(x) \in W^{k,\infty}$ and $x_0 \in \mathbb{R}$ are given. Assume further that $g_\epsilon \in W^{k+1,\infty}$. Then
there exists a unique strong solution pair $(S(t,x), x_c(t))$ to \eqref{eq:reg} -- \eqref{def:gaussian} -- \eqref{def:v_c} starting from initial
data $(S_0(x), x_0)$. Here \emph{strong solution} means that for every integer $0 \leq m \leq \lfloor k/2 \rfloor$, we have $\partial_t^m S \in L^\infty_t W^{k-2m,\infty}_x$, and that $x_c \in W^{\lceil k/2 \rceil,\infty}(\mathbb{R}_+)$, with bounds that are uniform in time.\\
If $g_\varepsilon \in C^\infty$, then $(S,x_c)$ is a classical solution (i.e., $C^\infty$ in $t$ and $x$).
\end{theo}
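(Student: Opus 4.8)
The plan is to recast the coupled system as a fixed-point problem for the curve $x_c(t)$ alone, using the fact that for a prescribed $C^0$ curve the parabolic equation \eqref{eq:reg} is linear and explicitly solvable. First I would fix a time horizon $T>0$ and a prescribed Lipschitz path $t\mapsto y(t)$ with $y(0)=x_0$, and solve
\be
\left(\tfrac{1}{\beta^2}+\tfrac{\alpha}{\beta^2}\partial_t-\partial_{xx}^2\right)S = \tfrac{\alpha\gamma}{\beta^2}\,g_\varepsilon(x-y(t)),\qquad S(0,\cdot)=S_0,
\ee
via the heat semigroup: writing $S = e^{-t/\alpha}e^{(\beta^2/\alpha)t\,\partial_{xx}^2}S_0 + \tfrac{\gamma}{1}\int_0^t e^{-(t-s)/\alpha}e^{(\beta^2/\alpha)(t-s)\partial_{xx}^2}g_\varepsilon(\cdot-y(s))\,ds$ (up to fixing constants). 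Since convolution with the Gaussian heat kernel is bounded on every $W^{m,\infty}$, and since $g_\varepsilon\in W^{k+1,\infty}$, one reads off directly that $S(t,\cdot)\in W^{k,\infty}$ uniformly in $t\in[0,T]$, with norm controlled by $\|S_0\|_{W^{k,\infty}}$ and $\|g_\varepsilon\|_{W^{k,\infty}}$; the exponential factor $e^{-t/\alpha}$ makes the source integral bounded uniformly in $T$, which is where the time-uniformity of the bounds will come from. Crucially, $\partial_x S(t,x)$ is then a well-defined continuous function — this is exactly the point of the regularization — and moreover $x\mapsto\partial_x S(t,x)$ is Lipschitz uniformly in $t$ because $S\in W^{2,\infty}_x$.

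Next I define the solution map $\Phi: y\mapsto x_c$, where $x_c$ is the unique solution of the ODE $\dot x_c = -\eta\,\partial_x S_y(t,x_c)$, $x_c(0)=x_0$, and $S_y$ is the field produced by $y$ above. Since $\partial_x S_y$ is globally Lipschitz in $x$ (uniformly in $t$) this ODE has a unique global solution by Picard–Lindelöf, and $|\dot x_c|\le \eta\|\partial_x S_y\|_\infty =: L$ is bounded independently of everything. I would then show $\Phi$ is a contraction on $C^0([0,T])$ for $T$ small: if $y_1,y_2$ are two paths, the difference $S_{y_1}-S_{y_2}$ satisfies the same parabolic equation with source $\tfrac{\alpha\gamma}{\beta^2}(g_\varepsilon(\cdot-y_1)-g_\varepsilon(\cdot-y_2))$, so by the same semigroup estimate $\|\partial_x S_{y_1}(t)-\partial_x S_{y_2}(t)\|_\infty \lesssim \|g_\varepsilon\|_{W^{2,\infty}}\int_0^t\|y_1-y_2\|_\infty\,ds \lesssim T\|y_1-y_2\|_{C^0}$; feeding this and the Lipschitz-in-$x$ bound into a Grönwall estimate for $x_{c,1}-x_{c,2}$ gives $\|\Phi(y_1)-\Phi(y_2)\|_{C^0}\le C(T)\,\|y_1-y_2\|_{C^0}$ with $C(T)\to0$ as $T\to0$. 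This yields a unique local solution; because none of the constants degenerate as time advances (the a priori bounds on $S$, $\partial_x S$, and hence on $L$ are uniform in $T$), the same $T$ works at every step and the solution extends globally, which also delivers the claimed time-uniform bounds.

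It remains to upgrade regularity. With $x_c$ now known to be Lipschitz, the source $g_\varepsilon(x-x_c(t))$ lies in $L^\infty_t W^{k+1,\infty}_x$ and is Lipschitz in $t$, so the semigroup formula and parabolic smoothing give $\partial_t^m S\in L^\infty_t W^{k-2m,\infty}_x$ for $0\le m\le\lfloor k/2\rfloor$; differentiating the ODE $\dot x_c=-\eta\partial_x S(t,x_c(t))$ repeatedly and using the chain rule together with these bounds gives $x_c\in W^{\lceil k/2\rceil,\infty}$ — each extra derivative of $x_c$ costs one time-derivative of $S$ (i.e. two spatial derivatives, but one of them is absorbed by the $\partial_x$ already present), which is precisely the bookkeeping behind the exponents $\lfloor k/2\rfloor$ and $\lceil k/2\rceil$. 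If $g_\varepsilon\in C^\infty$ this bootstrap never terminates, giving a classical $C^\infty$ solution. The main obstacle I anticipate is not any single estimate but getting the time-uniformity honestly: one must check that the dissipative factor $e^{-t/\alpha}$ genuinely renders the Duhamel integral bounded uniformly in $T$ in every $W^{m,\infty}$ norm (so that the a priori ceiling on $\|\partial_x S\|_\infty$, and hence the contraction constant $C(T)$, depends only on $T$ and not on the starting time of the interval), and then to argue carefully that this lets the fixed-step continuation run forever; the regularity bootstrap for $x_c$, while routine, also requires care to track that the $W^{\lceil k/2\rceil,\infty}$ bound stays uniform in time rather than growing.
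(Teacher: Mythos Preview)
Your proposal is correct and follows essentially the same route as the paper: Duhamel formula plus a contraction argument in $W^{k,\infty}$ for small $T$, time-uniform a priori bounds coming from the exponential damping factor $e^{-t/\alpha}$ to reiterate globally with a fixed step, and then a bootstrap for the time regularity of $S$ and $x_c$. The only cosmetic difference is that the paper iterates directly on the pair $(s^n,x_c^n)$ via an explicit lagged scheme (so $x_c^n$ is obtained by a single integration rather than by solving an ODE at each step), whereas you reduce to a fixed point on the path alone; the resulting estimates are the same.
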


Second, assuming we are working in the class of strong solutions given by Theorem \ref{THM1}, we have stable traveling left/right pulses when the
stiffness $\eta$ is sufficiently large, as well as stable stationary states when the stiffness is sufficiently small.

\begin{theo}\label{THM2}
\
\begin{enumerate}[i]
\item For a given starting location $x_0$, there exists a unique right-moving traveling pulse solution $S(t,x) = \bar{S}(x - x_0 - v_c t)$
and $x_c(t) = x_0 + v_c t$ to \eqref{eq:reg} -- \eqref{def:gaussian} -- \eqref{def:v_c} if and only if the stiffness $\eta$ is sufficiently large, depending
on $\alpha$, $\beta$, $\gamma$ and $g_\varepsilon$; the velocity $v_c$ can be computed implicitly from these quantities. This is made precise
in Lemma \ref{LEM:Pulse}.\\
\item When the traveling pulse exists, it is exponentially stable for sufficiently small perturbations. This is stated precisely in Proposition \ref{prop:stable_pulse}
and Corollary \ref{cor:higher_decay}.\\
\item For a given starting location $x_0$, there always exists a unique stationary solution $S(t,x) = \bar{S}_0(x-x_0)$.\\
\item When the stiffness $\eta$ is sufficiently small, all nontrivial solutions of \eqref{eq:reg} -- \eqref{def:gaussian} -- \eqref{def:v_c} converge
exponentially to $\bar{S}_0(x-\tilde{x})$ for some $\tilde{x}$. This is stated precisely in Proposition \ref{prop:stable_stat} and Corollary \ref{cor:stable_stat}.
\end{enumerate}
\end{theo}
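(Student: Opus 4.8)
The plan is to treat the four parts in sequence, since each builds on the previous. For part (i), I would pass to the traveling frame by setting $y = x - x_0 - v_c t$ and seeking $S(t,x) = \bar S(y)$ with $x_c(t) - x_0 - v_c t \equiv 0$; this forces $v_c = -\eta \bar S'(0)$ and reduces \eqref{eq:reg} to the linear ODE $\left(\tfrac{1}{\beta^2} - \tfrac{\alpha v_c}{\beta^2}\partial_y - \partial_{yy}^2\right)\bar S = \tfrac{\alpha\gamma}{\beta^2} g_\varepsilon(y)$ on $\mathbb{R}$. One solves this explicitly via the Fourier transform: $\hat{\bar S}(\xi) = \tfrac{\alpha\gamma}{\beta^2}\,\hat g_\varepsilon(\xi)\big/\big(\tfrac{1}{\beta^2} + \tfrac{i\alpha v_c}{\beta^2}\xi + \xi^2\big)$, which decays rapidly, so $\bar S$ is smooth and in every $W^{k,\infty}$. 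Differentiating and evaluating at $y=0$ gives a scalar consistency equation $v_c = -\eta\, \Phi(v_c)$ where $\Phi(v) = \tfrac{1}{2\pi}\int_{\mathbb{R}} \tfrac{i\xi\,\hat g_\varepsilon(\xi)}{1 + i\alpha v\xi + \beta^2\xi^2}\,d\xi$; by the symmetry of $g_\varepsilon$ one checks $\Phi(0) = 0$ (so $v_c = 0$ is always a solution, the stationary state) and that $\Phi$ is odd, smooth, with $\Phi'(0) < 0$, so a nonzero root $v_c$ of $v = -\eta\Phi(v)$ bifurcates exactly when $-\eta\Phi'(0) > 1$, i.e. when $\eta$ exceeds the threshold $\eta_0 = -1/\Phi'(0)$; a monotonicity/coercivity argument on $\Phi$ then gives uniqueness of the positive root. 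This is the content of the promised Lemma \ref{LEM:Pulse}. Part (iii) is the special case $v_c = 0$ of this analysis: $\bar S_0$ solves $\left(\tfrac{1}{\beta^2} - \partial_{yy}^2\right)\bar S_0 = \tfrac{\alpha\gamma}{\beta^2}g_\varepsilon$, which has the unique bounded solution $\bar S_0 = \tfrac{\alpha\gamma}{\beta^2}\big(\tfrac{1}{\beta^2} - \partial_{yy}^2\big)^{-1} g_\varepsilon$, and $\bar S_0$ is even, so $\bar S_0'(0) = 0$ and $\dot x_c = 0$ is consistent.

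For the stability statements (ii) and (iv), I would write a perturbed solution in the appropriate moving frame, $S(t,x) = \bar S(y) + w(t,y)$ with $y = x - x_c(t)$ (for (ii)) or $y = x - x_c(t)$ with the stationary profile (for (iv)), and derive the equation satisfied by $(w, \delta)$ where $\delta(t) = x_c(t) - (x_0 + v_c t)$ is the phase error. Linearizing, the perturbation $w$ satisfies a forced linear parabolic equation $\tfrac{\alpha}{\beta^2}\partial_t w = \big(\partial_{yy}^2 - \tfrac{1}{\beta^2} + \tfrac{\alpha v_c}{\beta^2}\partial_y\big) w + (\text{coupling to }\dot\delta)$, and the phase satisfies $\dot\delta = -\eta(\partial_y w(t,0) + \bar S''(0)\,\delta + \text{l.o.t.})$. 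The linear operator $L = \partial_{yy}^2 - \tfrac{1}{\beta^2} + \tfrac{\alpha v_c}{\beta^2}\partial_y$ has spectrum in $\{\RE z \le -\tfrac{1}{\beta^2}\}$, giving a uniform spectral gap and hence exponential decay of $w$ in $W^{k,\infty}$ via the explicit heat-type semigroup estimates (convolution with a shifted Gaussian times $e^{-t/(\alpha\beta^2)\cdot\beta^2}$), \emph{provided} the coupling with the neutral phase mode can be controlled. The key algebraic fact making (ii) and (iv) work is the sign of the feedback coefficient: in case (iv) (small $\eta$) the map $\delta \mapsto -\eta\bar S_0''(0)\delta$ plus the smallness of $\eta$ makes the $2\times 2$ "reduced" linear system (for the phase and, say, the first moment of $w$) contracting; in case (ii) (large $\eta$) one uses that $v_c$ sits at the stable branch of the bifurcation, so that $\tfrac{d}{dv}(v + \eta\Phi(v))\big|_{v_c} > 0$, which is precisely the transversality condition ensuring the phase mode is (marginally) stable and the nonlinear terms are harmless.

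The main obstacle I expect is the coupling between the decaying mode $w$ and the neutral translation mode $\delta$: because the problem is translation-invariant, the linearization always has a zero eigenvalue, and one must set up the decomposition so that $\delta$ absorbs exactly this mode while $w$ lives in a complementary space where $L$ is strictly dissipative. Concretely, I would impose an orthogonality/normalization condition on $w$ (e.g. $\int w\,\bar S' = 0$ or a pointwise condition fixing $w(t,0)$ or its first moment) to pin down $\delta(t)$ uniquely, then do a Lyapunov/bootstrap argument: assume $\|w(t)\|_{W^{k,\infty}} + |\dot\delta(t)|$ small on $[0,T]$, use Duhamel with the semigroup estimate for $L$ to close the bound on $w$, and use the ODE for $\delta$ together with the decay of $w(t,0)$ to get $|\dot\delta(t)| \lesssim e^{-ct}$, hence $\delta(t) \to \tilde x$ (some finite limit, giving the asymptotic phase). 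The nonlinear terms are all at least quadratic in $(w,\dot\delta)$ and involve only bounded-derivative factors of $\bar S$ and $g_\varepsilon$, so they are absorbed by smallness; the regularity needed on $g_\varepsilon$ is whatever Theorem \ref{THM1} already requires. Corollaries \ref{cor:higher_decay} and \ref{cor:stable_stat} then follow by propagating the exponential decay to higher $W^{k,\infty}$ norms through the smoothing of the parabolic semigroup, bootstrapping one derivative at a time.
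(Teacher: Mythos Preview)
Your treatment of parts (i) and (iii) matches the paper's: both solve the profile ODE by Fourier transform and read off the scalar consistency relation for $v_c$, with the threshold $\eta^*_\varepsilon$ arising from the $v_c\to 0$ limit of the integral (the paper's Lemma~\ref{LEM:Pulse}).

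For (ii) and (iv) your route diverges substantially. You propose the standard modulation-theory package: spectral gap for $L$ on a complement of the translation mode, an orthogonality condition to pin the phase, and a semigroup bootstrap. The paper does none of this. For (ii) it writes the perturbation system as $\dot y(t)=\eta\,\partial_x z(t,vt-y(t))$ with $z$ solving a forced damped heat equation, and derives a \emph{refined} Duhamel formula for $\dot y(T)$ by subtracting off the Duhamel representation of a translated pulse with \emph{frozen} phase $\tilde y=y(T)$; this cancels the $O(1)$ contribution $-\bar s'(0)=v/\eta$ and leaves only integrals of differences $(h_\tau\ast g')(v\tau+y(T-\tau)-y(T))-(h_\tau\ast g')(v\tau)$. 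A Gaussian-decay hypothesis on $g,g',g''$ (the paper's \eqref{eq:g-decay}) then yields $|(h_\tau\ast g'')(\theta_\tau)|\le M e^{-m v^2\tau^2/4}$, which closes a continuity argument giving $|\dot y(t)|\le a e^{-t}$; decay of $z$ follows from a barrier (maximum-principle) argument with $\phi^\epsilon=C'e^{-\delta t}+\epsilon(x^2+2)$. For (iv) the paper skips Duhamel entirely and runs a barrier argument directly on $w=\partial_x z$ with $\phi^\epsilon=Ce^{-\delta t}(1+\epsilon x^2)$: smallness of $\eta$ enters only through $\eta<(1-\delta)/(4\bar M_2)$ in the supersolution inequality, so the result is unconditional in the data, with no spectral analysis or orthogonality needed. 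Your spectral route, done carefully, would plausibly avoid the Gaussian-decay hypothesis the paper needs for (ii), at the price of having to rule out unstable point spectrum; the paper's maximum-principle route is more elementary but leans on that extra structural assumption on $g$.

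One inconsistency to flag in your outline: the phase equation you write, $\dot\delta=-\eta(\partial_y w(t,0)+\bar S_0''(0)\delta+\ldots)$, is the linearization around a \emph{fixed} translate, not the modulated decomposition you also invoke. Since $\bar S_0''(0)<0$ (from $\bar S_0''(0)=\bar S_0(0)-g(0)<0$), the coefficient $-\eta\bar S_0''(0)$ is positive, i.e.\ destabilizing at face value, so this cannot be the mechanism making the reduced system contracting for small $\eta$. In the paper's decomposition $s=\bar s_0(x+y(t))+z$, the phase equation is simply $\dot y=\eta\,\partial_x z(t,-y(t))$ with no $\bar s_0''(0)$ term, and stability comes from the decay of $\partial_x z$ forced by the barrier, not from a sign condition on $\bar S_0''(0)$.
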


This work is organized as follows. We comment on the existing literature concerning \eqref{eq:reg} -- \eqref{def:gaussian} -- \eqref{def:v_c} in Section \ref{sec:bio} and in Section \ref{sec:notation} we describe our notation. We study the stationary states and traveling pulses, proving parts (i) and (iii) of Theorem \ref{THM2} in Section \ref{sec:stat}. We formulate these two results in terms that include all physical constants, however the finer mathematical analysis benefits from simplifying the equation. As such, Section \ref{sec:rescaled} transforms equation \eqref{eq:reg} using rescaled variables to eliminate some of the physical constants, in order to better understand which parameters control the qualitative behaviour of the solutions. We then prove Theorem \ref{THM1} in Section \ref{sec:well}. Finally, we study the stability of the traveling pulse in Section \ref{sec:stab_tw} (proving part (ii) of Theorem \ref{THM2}), and we prove part (iv) of Theorem \ref{THM2} in Section \ref{sec:stab_stat}.

\section{Related problems}\label{sec:bio}

Self-generated gradients have recently attracted a lot of attention in the biological community. These gradients offer a robust strategy for a group of cells to orient themselves and navigate
an otherwise homogeneous medium.

We first mention phoresis which is a phenomenon in which a small, inert particle in a fluid moves relative to the undisturbed local velocity of the fluid that would exist at the point currently occupied by the particle if the particle were not present.  In this phenomenon, the driving force comes from an inhomogeneity in certain attributes of the particle-free fluid, such as temperature, pressure or fluid density. In \cite{Moran_Posner}, the authors review the rich physics underlying the operation of phoretically active colloids, and they describe their interactions and collective behaviors.  Micro- and nanoparticles can move by phoretic effects in response to externally imposed gradients of scalar quantities, such as chemical concentration or electrical potential. In such a framework, one class of active colloids can propel themselves in aqueous media by generating local gradients of concentration and electrical potential via surface reactions. Phoretic active colloids can be controlled by external stimuli and can mimic the collective behaviors exhibited by many biological swimmers. There is much in common between phoretic models and the model we study here: the latter is a minimal toy model of the phoretic type. To the best of our knowledge, there are no rigorous studies on the appearance of progressive waves in this type of experiment, and our work is a first step in this direction.

In a different direction, we mention \cite{griette2023speed} in which the authors study the velocity of the displacement wave for Fisher-KPP fronts under the influence of repulsive chemotaxis. In \cite{griette2023speed}, the authors provide an almost complete picture of the asymptotic dependence of the wave velocity on parameters representing the strength and length scale of chemotaxis. This latter work is based on the establishment of convergence to the Fisher-KPP traveling wave in porous media and a Fisher-KPP-Keller-Segel hyperbolic wave in certain asymptotic regimes. The proofs use a variety of techniques ranging from entropy methods and estimates of the decay of oscillations to a general description of the traveling wave. The main difference between the system we are studying here and the problem studied in \cite{griette2023speed}, lies in the description of the signal that generates the motion. In \cite{griette2023speed}, the signal is given by a repulsive Keller-Segel term.

Finally, we mention \cite{BiondoMarta2021TDoA,calvez2021mathematical,cochet2021hypoxia,demircigil2022,Demercigil}, which models the experiment in which a colony of Dictyostelium discoideum is able to escape hypoxia through a remarkable collective behavior. It is shown that oxygen consumption leads to self-generated oxygen gradients, which serve as directional cues and trigger a collective movement towards higher oxygen regions. In these works, the analysis is either purely numerical, or mathematical after the problem has been reduced to an ODE. The analysis we propose here is based on the study of the full PDE \eqref{eq:Clark_1} - \eqref{eq:Clark_2}, which has certain aspects in common with the hypotaxis model.

\section{Notations}\label{sec:notation}
We gather here some notation and some material that will be used in the sequel.
\begin{itemize}
	\item[$\bullet$] If $f:[0,+\infty[\times \mathbb{R}\longrightarrow \mathbb{R}$ is a real valued measurable function, for $1\leq p,q\leq +\infty$
	we will denote by $L^p_t(L^q_x)$ the $L^p$(in time)-$L^q$(in space) Lebesgue space which is given by the condition
	$$\|f\|_{L^p_t(L^q_x)}=\left(\int_{0}^{+\infty}\|f(t, \cdot)\|_{L^q}^p\D t\right)^{\frac1p}<+\infty,$$
	with the usual modifications when $p=+\infty$ or $q=+\infty$. 
	\item[$\bullet$] For $t>0$, the heat kernel is given by the function $h_t(x)=\frac{1}{\sqrt{4\pi t}}e^{-\frac{|x|^2}{4t}}$ for which we have the estimates
	\begin{equation}\label{HeatEstimate}
		\|\partial_x^kh_t\|_{L^p}\leq C t^{-\frac{k+(1-1/p)}{2}},
	\end{equation}
	with $k\in \mathbb{N}$ and $1\leq p\leq +\infty$. We also have that $\partial_t (h_t \ast f) = \Delta (h_t \ast f)$.
	\item[$\bullet$] The Sobolev spaces $W^{k,\infty}(\mathbb{R})$ are given as the set of measurable functions $f$ having weak derivatives of up to order
	$k$, with
	$$ \| f \|_{W^{1,\infty}(\mathbb{R})} = \| f \|_{L^\infty(\mathbb{R})} + \| \nabla^{k} f \|_{L^\infty(\mathbb{R})}<+\infty. $$
	By Morray's inequality, any such $f \in W^{k,\infty}$ is $C^{k-1,1}$.
	\item Some of our calculations will involve a precise constant, which requires a fixed definition of the Fourier transform. For any continuous
	$f \in L^1(\R)$, the Fourier transform $\hat f$ and its inverse are given by 
	\[
	\hat f (\xi)= \int_{\R} f(x) e^{-i x\xi} \dx \ \ \ \text{ and } \ \ \ f(x) = \frac{1}{2\pi} \int_{\R} \hat{f}(\xi) e^{i x \xi} \dxi.
	\]
	
\end{itemize}

\section{Stationary states and conditional traveling pulse solutions}\label{sec:stat}

Here we construct the special explicit solutions to \eqref{eq:reg} -- \eqref{def:v_c} that we will analyze in later sections. The first
of these is the stationary state, which always exists for all choices of parameters.

\subsection*{Proof of Theorem \ref{THM2} (iii)}
We assume that $x_0 = 0$. The pair $(\bar{S}_0(x), 0)$ will satisfy \eqref{eq:reg} -- \eqref{def:v_c} if and only if
\[
\bar{S}_0(x) - \beta^2 \partial^2_{xx} \bar{S}_0(x) = \alpha \gamma g_\epsilon (x) \ \ \ \text{ and } \ \ \ \partial_x \bar{S}_0(0) = 0 .
\]
The proof is then immediate from the Fourier transform of \eqref{eq:reg} in the $x$-variable. We see that $\bar{S}_0$ is given uniquely (in the class of integrable functions) by
\[
\hat{\bar{S}}_0(\xi) = \frac{\alpha \gamma \hat{g}_\epsilon(\xi)}{1+\beta^2 \xi^2} ,\]
so that
\[
\bar{S}_0(x) = \frac{\alpha\gamma}{2\pi} \int_{\R} \frac{\hat{g}_\epsilon(\xi) e^{ix \xi}}{1+\beta^2 \xi^2} \dxi 
= \frac{2 \alpha \gamma}{\pi} \int_0^\infty \frac{\hat{g}_\epsilon(\xi) \cos(x \xi)}{1+\beta^2 \xi^2} \dxi .
\]
The last equality used the fact that $g_\epsilon$ is an even function. Since $g_\epsilon$ is smooth, then so is $\bar{S}_0$. Furthermore, since $g_\epsilon$ is an even function, then likewise $\bar{S}_0(x)$ will be even, which means $\partial_x \bar{S}_0(0) = 0$.

If $x_0 \neq 0$ is given, then the pair $(\bar{S}_0(x-x_0), x_0)$ will solve  \eqref{eq:reg} -- \eqref{def:v_c}.\\
\qed

\subsection*{Proof of Theorem \ref{THM2} (i)}
For the (right-moving) traveling pulse, we seek a solution pair to \eqref{eq:reg} -- \eqref{def:v_c} of the form $(\bar{S}_0(x-v_c t), v_c t)$, for a fixed velocity $v_c > 0$. Of course,
by symmetry and translation invariance, $(\bar{S}_0(\pm(x- v_c t)- x_0), x_0 \pm v_c t )$ is also a solution corresponding to pulses traveling to the left and starting at an initial location $x_0$.
For simplicity we take $x_0 = 0$, while also mentioning that the stability analysis in the latter sections of this paper will need to consider $x_0$ arbitrary.

\begin{lemm}\label{LEM:Pulse}
	There exists $\eta^*_\epsilon > 0$ such that for all $\eta \ge \eta^*_\epsilon$, there exists a classical solution of \eqref{eq:reg} -- \eqref{def:v_c} of the form $S(t,x)= \bar{S}(x- v_c t)$ where $v_c >0$ is given implicitly by the relation
	\begin{equation}
		\frac{\eta}{\pi } \int_{0}^\infty \frac{\alpha ^2\gamma \hat g_\varepsilon (\xi)   \xi ^2}{(1 +\beta^2 \xi^2 )^2+ \alpha^2 v_c^2 \xi ^2 } \dxi=1.\label{eq:v_2}
	\end{equation}
	In the limit as $g_\epsilon$ approaches a delta function, we have that
	\begin{equation}
	\lim_{\epsilon \to 0+} \eta^*_\epsilon = \eta^*_0 := 4\alpha^{-2} \gamma^{-1} \beta^3 .
\label{eq:v_3}
	\end{equation}
\end{lemm}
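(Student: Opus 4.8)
The plan is to look for a traveling-pulse profile $\bar S(y)$ with $y = x - v_c t$ for some fixed speed $v_c>0$ and to reduce the existence problem to the scalar fixed-point equation \eqref{eq:v_2}. Substituting $S(t,x)=\bar S(x-v_c t)$ into \eqref{eq:reg} turns the PDE into the linear constant-coefficient ODE
\begin{equation*}
\frac{1}{\beta^2}\bar S - \frac{\alpha v_c}{\beta^2}\bar S' - \bar S'' = \frac{\alpha\gamma}{\beta^2} g_\varepsilon(y),
\end{equation*}
which I would solve by Fourier transform in $y$, obtaining
\begin{equation*}
\hat{\bar S}(\xi) = \frac{\alpha\gamma\, \hat g_\varepsilon(\xi)}{1+\beta^2\xi^2 - i\alpha v_c \xi}.
\end{equation*}
Because $g_\varepsilon$ is smooth and rapidly decaying, this defines a smooth, bounded $\bar S$ together with all its derivatives, so regularity of the profile is automatic; I would note that the denominator never vanishes for $v_c\in\R$, so there is no solvability obstruction at this stage.

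Next I would impose the self-consistency condition coming from \eqref{def:v_c}: the cell sits at $x_c(t)=v_c t$, i.e.\ at $y=0$ in the moving frame, so we need $v_c = -\eta\, \bar S'(0)$. Differentiating the inverse Fourier representation,
\begin{equation*}
\bar S'(0) = \frac{1}{2\pi}\int_\R \frac{i\xi\,\alpha\gamma\,\hat g_\varepsilon(\xi)}{1+\beta^2\xi^2 - i\alpha v_c\xi}\,\dxi,
\end{equation*}
and I would split the integrand into real and imaginary parts by multiplying by the conjugate $1+\beta^2\xi^2 + i\alpha v_c\xi$. Using that $\hat g_\varepsilon$ is real and even (since $g_\varepsilon$ is real and even), the even-in-$\xi$ part of the numerator integrates to zero and the surviving contribution is the odd part, which after folding $\R$ to $[0,\infty)$ yields exactly
\begin{equation*}
-\bar S'(0) = \frac{1}{\pi}\int_0^\infty \frac{\alpha^2\gamma\, v_c\, \hat g_\varepsilon(\xi)\,\xi^2}{(1+\beta^2\xi^2)^2 + \alpha^2 v_c^2\xi^2}\,\dxi.
\end{equation*}
Thus $v_c = -\eta\bar S'(0)$ becomes, after cancelling one factor of $v_c>0$, precisely \eqref{eq:v_2}. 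So the existence of a right-moving pulse is equivalent to the existence of a positive root $v_c$ of \eqref{eq:v_2}.

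It remains to analyze the function $F(v_c) := \frac{\eta}{\pi}\int_0^\infty \frac{\alpha^2\gamma\,\hat g_\varepsilon(\xi)\,\xi^2}{(1+\beta^2\xi^2)^2+\alpha^2 v_c^2\xi^2}\,\dxi$ for $v_c\in(0,\infty)$. I would observe that $F$ is continuous, strictly decreasing in $v_c$ (the denominator increases pointwise), with $F(v_c)\to 0$ as $v_c\to\infty$ and $F(v_c)\to F(0^+) = \frac{\eta}{\pi}\int_0^\infty \frac{\alpha^2\gamma\,\hat g_\varepsilon(\xi)\,\xi^2}{(1+\beta^2\xi^2)^2}\,\dxi =: \eta\, c_\varepsilon$ as $v_c\to 0^+$, where $c_\varepsilon$ is a finite positive constant (finiteness uses the $\xi^2/(1+\beta^2\xi^2)^2$ decay plus $\hat g_\varepsilon\le 1$). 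Hence $F(v_c)=1$ has a (unique) positive solution if and only if $\eta\, c_\varepsilon > 1$, i.e.\ if and only if $\eta \ge \eta^*_\varepsilon := 1/c_\varepsilon$ (with strict/non-strict inequality to be pinned down by whether $F(0^+)$ is attained; since $v_c>0$ is required one gets $\eta>\eta^*_\varepsilon$, but a pulse with $v_c\to 0$ degenerates to the stationary state, so the threshold statement $\eta\ge\eta^*_\varepsilon$ is the natural phrasing). Finally, for the limit \eqref{eq:v_3}, I would use $\hat g_\varepsilon(\xi) = e^{-\varepsilon^2\xi^2/4}\to 1$ pointwise with the uniform bound $\hat g_\varepsilon\le 1$, so by dominated convergence $c_\varepsilon \to c_0 = \frac{1}{\pi}\int_0^\infty \frac{\alpha^2\gamma\,\xi^2}{(1+\beta^2\xi^2)^2}\,\dxi$, and this last integral I would evaluate explicitly (substitute $u=\beta\xi$, then $\int_0^\infty \frac{u^2}{(1+u^2)^2}\,du = \pi/4$) to get $c_0 = \alpha^2\gamma/(4\beta^3)$, whence $\eta^*_0 = 1/c_0 = 4\alpha^{-2}\gamma^{-1}\beta^3$, matching \eqref{eq:v_3}.

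The genuinely delicate point is not any single estimate but the bookkeeping of the threshold: one must be careful that the profile $\bar S$ constructed for a given candidate $v_c$ is automatically a \emph{classical} solution (which follows from smoothness and decay of $g_\varepsilon$), and that the reduction to \eqref{eq:v_2} is an equivalence rather than just an implication, so that non-existence of a positive root for $\eta$ below threshold genuinely rules out a right-moving pulse. The dominated-convergence step for \eqref{eq:v_3} is routine given the explicit Gaussian Fourier transform and the uniform bound $\hat g_\varepsilon\le 1$ already recorded in the introduction; the explicit evaluation of the limiting integral is the only computation, and it is elementary.
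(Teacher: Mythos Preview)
Your proof is correct and follows essentially the same route as the paper: substitute the traveling-wave ansatz, solve by Fourier transform, impose the compatibility condition $v_c=-\eta\bar S'(0)$ using the evenness of $\hat g_\varepsilon$ to reduce to \eqref{eq:v_2}, then analyze the resulting function of $v_c$ via its limits at $0$ and $\infty$ and evaluate the limiting integral explicitly. Your write-up is in fact slightly more complete than the paper's---you note the strict monotonicity of $F$ (hence uniqueness of $v_c$) and invoke dominated convergence explicitly for \eqref{eq:v_3}---but the argument is the same.
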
	
\begin{proof}
Under the traveling pulse ansatz (writing $w = x-v_c t$), \eqref{eq:reg} becomes
\[
\left( \frac{1}{\beta^2} - \frac{\alpha v_c}{\beta^2} \frac{\D}{\D w} - \frac{\D^2}{\D w^2} \right) \bar{S}(w)
= \frac{\alpha \gamma}{\beta^2} g_\epsilon (w) .
\]
Taking the Fourier transform, we obtain
\be\label{eq:reg_fourier}
\left(1 - \alpha v_c i \xi  +\beta^2 \xi ^2\right) \hat{\bar{S}}(\xi)= \alpha \gamma \hat g_\varepsilon (\xi),
\ee
hence
\begin{eqnarray}
\hat{\bar{S}}(\xi) &=& \frac{\alpha \gamma \hat g_\varepsilon (\xi) }{1 +\beta^2 \xi ^2- \alpha v_c i \xi  } \nonumber \\
&=& \frac{\alpha \gamma \hat g_\varepsilon (\xi) \left(1 +\beta^2 \xi ^2+ \alpha v_c i \xi \right) }{(1 +\beta^2\xi ^2)^2+ \alpha^2 v_c^2 \xi^2  },\label{eq:exp_S_Fourier}
\end{eqnarray}
and the traveling pulse solution must take the form
\[
\bar{S}(w) = \frac{2}{\pi} \int_0^\infty \frac{\alpha \gamma \hat{g}_\epsilon(\xi) \left((1+\beta^2 \xi^2) \cos(x \xi) - \alpha v_c \xi \sin(x \xi) \right)}
{(1+\beta^2 \xi^2)^2 + \alpha^2 v_c^2 \xi^2} \dxi .
\]
The above expression will always solve \eqref{eq:reg}, but to form a solution pair with $x_c(t) = v_c t$ it must also satisfy the condition \eqref{def:v_c},
which (after using \eqref{eq:exp_S_Fourier}) requires that
\begin{eqnarray}
v_c&=& - \eta \frac{d}{dw} \hat{\bar{S}}(w=0)\nonumber\\
&=& - \frac{\eta}{2\pi } \int_{\R} i \xi  \hat{\bar{S}}(\xi) \dxi\nonumber\\
&=&  v_c \frac{\eta}{\pi} \int_0^\infty \frac{\alpha^2 \gamma \hat{g}_\epsilon(\xi) \xi^2}{(1+\beta^2 \xi^2)^2 + \alpha^2 v_c^2 \xi^2} \dxi ,
\end{eqnarray}
which yields the implicit value of $v_c$ (given the parameters $\eta$, $\alpha$, $\beta$, $\gamma$, and $g_\epsilon$) seen in \eqref{eq:v_2},
provided that $v_c > 0$.

We now observe that the expression \eqref{eq:v_2} implies that $\eta$ must have a minimum value to permit the existence of traveling pulses. Taking the limit of the right hand side when $v_c \to 0$ yields
\[
 \frac{\eta}{\pi } \int_{0}^\infty \frac{\alpha ^2\gamma \hat g_\varepsilon (\xi)   \xi ^2 }{(1 +\beta^2 \xi ^2 )^2  } \dxi,
\]
while the limit of the right hand side when $v_c \to + \infty$ is $0$. Hence the question now is whether
\[
 \frac{\eta}{\pi } \int_{0}^\infty \frac{\alpha ^2\gamma \hat g_\varepsilon (\xi)    \xi ^2}{(1 +\beta^2 \xi ^2 )^2  } \dxi >1.
\]
We then have an exact expression for the critical value $\eta^*_\varepsilon$:
\[
\eta^*_\epsilon = \frac{\pi}{\alpha^2 \gamma} \left(  \int_{0}^\infty \frac{\hat g_\varepsilon (\xi)    \xi ^2 }{(1 +\beta^2 \xi ^2)^2  } \dxi  \right)^{-1} .
\]
We compute 
\begin{eqnarray*}
A&:=&  \int_{0}^\infty \frac{  \xi ^2 }{(1 +\beta^2 \xi ^2 )^2  } \dxi\\
&=& \frac{1}{\beta^2}  \int_{0}^\infty \frac{  1 }{1 +\beta^2 \xi ^2  } \dxi - \frac{1}{\beta^2} \int_{0}^\infty \frac{  1 }{(1 +\beta^2 \xi ^2 )^2  } \dxi\\
&=& \frac{\pi}{2\beta^3}-\frac{\pi}{4\beta^3} = \frac{\pi}{4\beta^3}.
\end{eqnarray*}
Note that, as $\epsilon \to 0$, $\hat{g}_\epsilon$ converges monotonically to the constant $1$ function (by choice of normalization in \eqref{def:gaussian}). Then
as $\epsilon \to 0$, $\eta^*_\epsilon$ is monotone decreasing and converges to
\[
\eta^*_0 = \frac{4\beta^3}{\pi}\frac{\pi}{\alpha ^2\gamma}=\frac{4\beta^3}{\alpha ^2\gamma} ,
\]
which proves \eqref{eq:v_3}.
\end{proof}

\section{Rescaled Equation}\label{sec:rescaled}

For the remainder of this work, we fix $\epsilon > 0$ and write only $g$ for the deformation term on the right-hand side, which we assume to be a fixed, smooth, even function with sufficient decay at infinity.\\
\\
From the mathematical perspective, it is simpler to examine equation \eqref{eq:reg} in rescaled coordinates to eliminate some of the physical constants.
This will help us to better see which parameters control the qualitative behavior of solutions.We start with the problem in its original form:
\begin{equation}\label{eq:lab}
\begin{split}
&\alpha \partial_t s(t,x) + s(t,x) - \beta^2 \partial_{xx}^2 s(t,x) = \alpha \gamma g(x-x_c(t)),\\
&\quad\quad \dot{x}_c(t) = -\eta \partial_x s(t,x_c(t)).
\end{split}
\end{equation}
Given a solution pair $(s, x_c)$ to \eqref{eq:lab}, a scaling factor $\lambda > 0$, and constants $a, b, c, d$ to be determined later, we then define
\[
s_\lambda (t,x) := \lambda^a s(\lambda^b t , \lambda^c x), \]
and
\[ x_{c\lambda}(t) := \lambda^d x_c(\lambda^b t).
\]
Then, provided that $d+c=0$, the pair $(s_\lambda, x_{c\lambda})$ solves
\[
\begin{split}
&\alpha_\lambda \partial_t s_\lambda(t,x) + s_\lambda(t,x) - \beta_\lambda^2 \partial_{xx}^2 s_\lambda(t,x) =
\alpha_\lambda \gamma_\lambda g_\lambda (x-x_{c\lambda}(t)) ,\\
&\quad\quad
\dot{x}_{c\lambda}(t) = -\eta \lambda^{b-a-2c} \partial_x s_\lambda(t,x_{c\lambda}(t)),
\end{split}
\]
where
\[
\alpha_\lambda = \alpha \lambda^{-b}, \ \ \beta_\lambda = \beta \lambda^{-c}, \ \ \gamma_\lambda = \gamma \lambda^{a+b},
\ \ \text{ and } \ \ g_\lambda(z) = g(\lambda^c z).
\]
This allows us to choose $\lambda$, $a$, $b$, and $c$ such that $\alpha_\lambda = \beta_\lambda = \gamma_\lambda = 1$. Up to
redefining $\eta$ and $g$, the model we study is essentially
\begin{equation}\label{eq:mat}
\begin{split}
\partial_t s(t,x) + s(t,x) - \partial_{xx}^2 s(t,x) &= g(x-x_c(t)) \\
\dot{x}_c(t) &= -\eta \partial_x s(t,x_c(t))
\end{split}
\end{equation}
Note how the only remaining parameter that determines the qualitative behavior of solutions to \eqref{eq:mat} is the (rescaled) stiffness $\eta$.
Under this formulation, the stationary solution $s(t,x) = \bar{s}_0(x)$ solves the equation
\begin{equation}\label{eq:stationary}
\bar{s}_0(x) - \partial_{xx}^2 \bar{s}_0(x) = g(x) ,
\end{equation}
and the traveling pulse solution $\tilde{s}(t,x) = \bar{s}(x-tv)$ solves the equation
\begin{equation}\label{eq:traveling}
\begin{split}
	& -v \bar{s}'(z) + \bar{s}(z) - \bar{s}''(z) = g(z), \\ 
	& v = -\eta \bar{s}'(0) ,
\end{split}	
\end{equation}
with explicit formula
\[
\bar{s}(z) = \frac{1}{2\pi} \int_{\R} \frac{e^{iz\xi} \hat{g}(\xi)}{1+\xi^2-iv\xi} \diff \! \xi .
\]
This formulation also makes it clear that the traveling pulse solutions only exist for sufficiently large stiffness $\eta$, as
\[
\bar{s}'(0) = -\frac v \eta = \frac{1}{2\pi} \int_{\R} \frac{i \xi \hat{g}(\xi)}{1+\xi^2-iv\xi} \diff \! \xi =
- \frac{1}{\pi}  \int_0^\infty \frac{v \xi^2 \hat{g}(\xi)}{(\xi^2+1)^2 + v^2 \xi^2} \diff \! \xi .
\]
We also remark that the model \eqref{eq:mat} tends to bring the total deformation (the integral of $s$ in space) towards an equilibrium given by $g$.
That is, defining
$$ s_\text{tot}(t) :=  \int_{\R} s(t,x) \diff \! x , $$
we see that
\[
\frac{\D}{\D t} s_\text{tot}(t) = \int_{\R} g(x) \diff \! x - s_\text{tot}(t) ,
\]
so that
\begin{equation}
\label{eq:deformation}
s_\text{tot}(t) = e^{-t} s_\text{tot}(0) + (1-e^{-t}) \int_{\R} g(x) \diff \! x .
\end{equation}

\section{Global Well-posedness}\label{sec:well}
This section demonstrates that the Cauchy problem for \eqref{eq:mat} is globally well-posed; i.e., that
solutions exist for all time, are unique, and are also smooth. The extent of this smoothness depends
on the regularity of $g$, while the uniformity in time of this smoothness depends additionally on $s_0$. The nonlinear nature of \eqref{eq:mat}
is quite mild, so these are all handled by a Picard iteration in a suitable $W^{k,\infty}$ space. Importantly, this argument requires
for $g$ to have some amount of smoothness.\\

\begin{prop}\label{prop:exists}
For any fixed integer $k \geq 2$, assume $s_0\in W^{k,\infty}(\mathbb{R})$ and $x_0 \in \R$ are given initial data. Assume also
that $g \in W^{k+1,\infty}$. Then the system
\eqref{eq:mat} admits a unique solution pair $(s,x_c)$  in $L^\infty_{[0, T]} W^{k,\infty}_{\mathbb{R}} \times L^\infty_{[0,T]}$,
with any $T>0$, starting from that initial data.
\end{prop}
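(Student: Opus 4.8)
The plan is to recast \eqref{eq:mat} as a fixed-point problem for the \emph{single} scalar unknown $x_c$: once the path $t\mapsto x_c(t)$ is prescribed, the deformation $s$ is uniquely determined by Duhamel's formula for the damped heat equation. For $x_c\in C([0,T])$ I set
\[
s[x_c](t,x):=e^{-t}(h_t\ast s_0)(x)+\int_0^t e^{-(t-\tau)}\big(h_{t-\tau}\ast g(\cdot-x_c(\tau))\big)(x)\,\D\tau,
\]
and define $\Phi$ on the complete metric space $E:=\{x_c\in C([0,T]):x_c(0)=x_0\}$, with the sup-metric, by
\[
\Phi(x_c)(t):=x_0-\eta\int_0^t\partial_x s[x_c]\big(\tau,x_c(\tau)\big)\,\D\tau.
\]
Since $k\ge 2$, the embedding $W^{k,\infty}\hookrightarrow C^{k-1,1}\subseteq C^1$ makes $\partial_x s[x_c](\tau,\cdot)$ a genuine continuous function for each $\tau$, and a routine dominated-convergence argument (using boundedness of $g$ and continuity of $x_c$) shows that $(\tau,x)\mapsto\partial_x s[x_c](\tau,x)$ is jointly continuous; hence $\Phi$ is well defined with values in $C^1([0,T])\subset E$, and a fixed point $x_c=\Phi(x_c)$ yields a solution pair $(s[x_c],x_c)$ of \eqref{eq:mat} --- the ODE coming from the integral equation together with continuity of $\tau\mapsto\partial_x s[x_c](\tau,x_c(\tau))$, and the PDE from Duhamel and $\partial_t(h_t\ast f)=\Delta(h_t\ast f)$. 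Two trivial facts will be used repeatedly: $\|h_t\ast f\|_{W^{j,\infty}}\le\|f\|_{W^{j,\infty}}$ (as $\|h_t\|_{L^1}=1$), and $\|g(\cdot-a)\|_{W^{j,\infty}}=\|g\|_{W^{j,\infty}}$ for every $a$. Combined with Duhamel they give the time-uniform a priori bounds $\|s[x_c](t,\cdot)\|_{W^{k,\infty}}\le\|s_0\|_{W^{k,\infty}}+\|g\|_{W^{k,\infty}}=:M_0$ and, in particular, $\|\partial_{xx}s[x_c]\|_{L^\infty_tL^\infty_x}\le M_0$.

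The heart of the matter is the contraction estimate. Given $x_c^1,x_c^2\in E$, write $s^i:=s[x_c^i]$. Duhamel, the mean value theorem applied to $g'$, and $\|h_t\ast f\|_{L^\infty}\le\|f\|_{L^\infty}$ give $\|\partial_x(s^1-s^2)(\tau,\cdot)\|_{L^\infty}\le\|g\|_{W^{2,\infty}}\,\|x_c^1-x_c^2\|_{C([0,T])}$. Decomposing $\partial_x s^1(\tau,x_c^1(\tau))-\partial_x s^2(\tau,x_c^2(\tau))$ as $[\partial_x s^1-\partial_x s^2](\tau,x_c^1(\tau))+\big(\partial_x s^2(\tau,x_c^1(\tau))-\partial_x s^2(\tau,x_c^2(\tau))\big)$, I bound the first summand by the previous line and the second, via the mean value theorem, by $\|\partial_{xx}s^2(\tau,\cdot)\|_{L^\infty}\,|x_c^1(\tau)-x_c^2(\tau)|\le M_0\,\|x_c^1-x_c^2\|_{C([0,T])}$. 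Integrating over $\tau\in[0,t]$ with $t\le T$ yields
\[
\|\Phi(x_c^1)-\Phi(x_c^2)\|_{C([0,T])}\le\eta\,T\,\big(\|g\|_{W^{2,\infty}}+M_0\big)\,\|x_c^1-x_c^2\|_{C([0,T])},
\]
so $\Phi$ is a contraction on $E$ once $T<T_*:=\big(2\eta(\|g\|_{W^{2,\infty}}+M_0)\big)^{-1}$, and the Banach fixed-point theorem produces a unique $x_c$, hence a unique solution pair, on $[0,T]$. The hypothesis $k\ge 2$ enters precisely here: displacing the evaluation point of $\partial_x s$ along the perturbed path costs one further spatial derivative of $s$, i.e.\ two spatial derivatives of the data. (The extra derivative in $g\in W^{k+1,\infty}$ beyond $W^{k,\infty}$ is in fact not needed for this step; it is what makes $g(\cdot-x_c(\tau))$ depend Lipschitz-continuously on $x_c$ with values in $W^{k,\infty}$, and will be used when bootstrapping to the strong/classical regularity of Theorem \ref{THM1}.)

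Finally, local existence is promoted to global existence by the standard continuation argument, and here the damping term $+s$ in \eqref{eq:mat} removes any possibility of blow-up: the local time $T_*$ depends only on $\eta$ and on an upper bound for $\|g\|_{W^{2,\infty}}+\|s(t_0,\cdot)\|_{W^{k,\infty}}$ at the starting time $t_0$, while Duhamel's formula keeps $\|s(t,\cdot)\|_{W^{k,\infty}}\le M_0$ for as long as the solution exists, with $M_0$ fixed by the data and $g$ alone. One therefore restarts the construction at $t=T_*,2T_*,\dots$ with the solution's value as new initial data --- always with the same step --- and glues, obtaining the (unique) solution on all of $[0,\infty)$ with $\|s\|_{L^\infty_tW^{k,\infty}_x}\le M_0$, so in particular $(s,x_c)\in L^\infty_{[0,T]}W^{k,\infty}_{\mathbb{R}}\times L^\infty_{[0,T]}$ for every $T>0$. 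Global uniqueness follows from local uniqueness by the usual open-closed argument applied to the set of times at which two solutions with the same data agree. I do not anticipate a genuine obstacle: the two points needing care are the bookkeeping of which norms of $g$ and $s_0$ enter each estimate, and the verification that the Duhamel (mild) solution is an honest solution of the coupled PDE/ODE system (immediate from parabolic regularity), while globalization is essentially free because the linear damping forces the $W^{k,\infty}$ norm to stay uniformly bounded in time.
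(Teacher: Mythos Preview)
Your proof is correct and takes essentially the same approach as the paper: a Picard-type contraction on a short time interval using Duhamel for the damped heat equation, the mean value theorem applied to $g'$, and the time-uniform $W^{k,\infty}$ bound from $\|h_t\|_{L^1}=1$ plus the exponential damping, followed by globalization via restarting with the same step size. The only cosmetic difference is that you reduce to a fixed point for the scalar path $x_c$ alone (with $s=s[x_c]$ determined by Duhamel) and invoke Banach directly, whereas the paper iterates on the pair $(s^n,x_c^n)$ and shows the sequence is Cauchy; your packaging is slightly cleaner since uniqueness drops out of the contraction rather than requiring a separate Gr\"onwall argument.
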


\begin{proof}
Suppose that $s_0$ and $x_0$ are given initial data. We first define the approximate solutions by setting
\[
s^0(t,x) = s_0(x) \quad  \textrm{ and } \quad x^0_c(t) = x_0 ,
\]
and for each $n$ solving
\begin{equation}\label{eq:approx}
\begin{split}
&\partial_t s^n(t,x) + s^n(t,x) - \partial_{xx}^2 s^n(t,x) = g(x-x^n_c(t)) \\
&\quad\quad \dot{x}^n_c(t) = -\eta \partial_x s^{n-1}(t,x^{n-1}_c(t)).
\end{split}
\end{equation}
The system \eqref{eq:approx} solves for $x^n_c$ by direct integration and then solves for $s^n$ by the heat equation with
damping, so $(s^n, x^n_c)$ exists and is smooth for all time. Recalling \eqref{HeatEstimate}), we have
\begin{equation}\label{eq:approx_sol}
\begin{split}
s^n(t,x) &= e^{-t} h_t\ast s_{0}(x) + \int_0^t  e^{-(t-\tau)} h_{t-\tau}\ast  g(x-x^{n-1}_c(\tau)) \diff \! \tau, \\
(\partial_x)^k s^n(t,x) &= e^{-t} h_t\ast s_{0}^{(k)}(x) + \int_0^t e^{-(t-\tau)} h_{t-\tau}\ast  g^{(k)}(x-x^{n-1}_c(\tau)) \diff \! \tau, \\
x^n_c(t) &= x_0 - \eta \int_0^T s_x^{n-1}(\tau, x^{n-1}_c(\tau)) \diff \! \tau.
\end{split}
\end{equation}
More importantly, the approximate solution $s^n$ is bounded uniformly in every $W^{k,\infty}$ space. Indeed,
since convolution with $h_t$ is bounded by 1 as an operator from $L^\infty$ to $L^\infty$,
\begin{equation}\label{eq:Picard-ball}
\| s^n \|_{W^{k,\infty}_x}(t) \leq e^{-t} \| s_0 \|_{W^{k,\infty}} + (1-e^{-t}) \| g \|_{W^{k,\infty}} \leq M_k,
\end{equation}
where $M_k$ depends on $s_0$ and $g$, but not on $t$.\\
\\
For a fixed $T>0$ to be determined later, our Banach space will be $L^\infty_{[0,T]} W^{k,\infty}_\R$. Observe
that \eqref{eq:Picard-ball} demonstrates that the sequence $\lbrace s^n \rbrace_n$ is bounded. Define
\[
\Psi_k^n(t) = \sup_{(\tau, x) \in [0, t] \times \R} \left|(\partial_x)^k s^n(\tau, x) - (\partial_x)^k s^{n-1}(\tau,x) \right| ,
\] 
and
\[ 
\sigma^n(t) = \sup_{0 \leq \tau \leq t} \left| x^n_c(\tau) - x^{n-1}_c(\tau) \right| .
\]
Using \eqref{eq:approx_sol}, the usual estimate for quadratic nonlinearities, and the mean value theorem, we have
\[
\begin{split}
\sigma^n(T) &\leq \eta \int_0^T \left| s_x^{n-1}(\tau,x^{n-1}_c(\tau)) - s_x^{n-2}(\tau,x^{n-2}_c(\tau)) \right| \diff \! \tau \\
&\leq \eta \int_0^T \left| s_x^{n-1}(\tau, x^{n-1}_c) - s_x^{n-1}(\tau, x^{n-2}_c) \right|
	+ \left| s_x^{n-1}(\tau, x^{n-2}_c) - s_x^{n-2}(\tau, x^{n-2}_c) \right| \diff \! \tau \\
&\leq \eta \int_0^T \| s^{n-1} \|_{W^{2,\infty}}(\tau) \sigma^{n-1}(\tau) \diff \! \tau + \eta \int_0^T \Psi_1^{n-1}(\tau) \diff \! \tau \\
&\leq \eta T M_2 \sigma^{n-1}(T) + \eta T \Psi_1^{n-1}(T).
\end{split}
\]
From the Duhamel formula in \eqref{eq:approx_sol} and again the mean value theorem we also have
\[
\begin{split}
\Psi^n_k(T) &\leq \int_0^T e^{-(T-\tau)} \left| h_{t-\tau} \ast g^{(k)}(x-x^n_c(\tau)) - h_{t-\tau} \ast g^{(k)}(x-x^{n-1}_c(\tau)) \right| \diff \! \tau \\
&\leq \int_0^T \| h_{t-\tau}\ast g^{(k+1)} \|_{L^\infty} \left| x^n_c(\tau) - x^{n-1}_c(\tau) \right| \diff \! \tau \leq T \sigma^n (T) \| g \|_{W^{k+1,\infty}} \\
&\leq \eta T^2 M_{k+1} \left( M_2 \sigma^{n-1}(T) + \Psi_1^{n-1}(T) \right).
\end{split}
\]
Hence, the quantity $\Phi^n(T) := \sigma^n(T) + \Psi_0^n(T) + \Psi_1^n(T) + \cdots + \Psi_k^n(T)$ (for $k \geq 2$) will satisfy
\[
\Phi^n(T) \leq C_k T^2 \Phi^{n-1}(T),
\]
where $C_k$ depends on $\eta$ and $M_0 , \cdots , M_{k+1}$. For $T$ sufficiently small, we conclude that the sequence $\lbrace s^n \rbrace_n$
converges in our Banach space to a unique solution $s$ of \eqref{eq:mat}, and that the sequence $\lbrace x^n_c \rbrace_n$ also
converges to $x_c$ in $L^\infty$.\\
\\
To extend to a global solution, we reiterate the problem starting from $s(T,x)$. Importantly, recall that \eqref{eq:Picard-ball} holds true
a priori for all time and all $n$, so that the value of the $M_k$ used in the reiterated argument do not increase with each reiteration. That is, the $M_k$
used to define $C_k$ will still only depend on $s_0(x)$, and not on $s(T,x)$. Thus each reiteration
extends the solution for a time interval of the same length, which gives the global existence.\\
\\
From \eqref{eq:approx_sol} and the convergence of $(s^n,x^n_c)$ to $(s,x_c)$ in $L^\infty_t W^{k\infty}_x \times L^\infty$, we see that
$x_c$ is in fact differentiable, and that (along with \eqref{eq:Picard-ball})
\begin{equation}\label{eq:strong}
\begin{split}
\dot{x}_c(t) &= -\eta s_x (t,x_c(t)), \ \ \ x_c(t) = x_0 - \eta \int_0^t s_x(\tau, x_c(\tau)) \diff \! \tau \\
s(t,x) &= e^{-t} (h_t \ast s_0)(x) + \int_0^t e^{-(t-\tau)} (h_{t-\tau} \ast g)(x-x_c(\tau)) \diff \! \tau \\
\| s \|_{W^{k,\infty}_x}(t) &\leq M_k.
\end{split}
\end{equation}
To prove uniqueness, we observe that any strong solution must also satisfy \eqref{eq:strong}.
We then assume that we have two such solutions $(s_1, x_{c,1})$ and $(s_2, x_{c,2})$ that both start from initial data $(s_0, x_0)$.
Define
\[
F(t) := \left| x_{c,1}(t) - x_{c,2}(t) \right| + \sup_x \left| \partial_x s_1(t,x) - \partial_x s_2(t,x) \right|,
\]
observing that $F(0)=0$. Writing $G_t(x) = (h_t \ast g)(x)$, we use \eqref{eq:strong} and the mean value theorem to obtain
\[
\begin{split}
\left| \partial_x s_1(t,x) - \partial_x s_2(t,x) \right|
&\leq \int_0^t e^{-(t-\tau)} \left| G'_{t-\tau}(x-x_{c,1}(\tau)) - G'_{t-\tau}(x-x_{c,2}(\tau)) \right| \diff \! \tau \\
&\leq \int_0^t 1 \cdot \left| G''_{t-\tau}(\theta) \right| \left| x_{c,1}(\tau) - x_{c,2}(\tau) \right| \diff \! \tau \\
&\leq \int_0^t \| g \|_{W^{2,\infty}} F(\tau) \diff \! \tau
\end{split}
\]
and
\[
\begin{split}
\left| x_{c,1}(t) - x_{c,2}(t) \right| &\leq \eta \int_0^t \left| \partial_x s_1(\tau,x_{c,1}(\tau)) - \partial_x s_2(\tau,x_{c,2}(\tau)) \right| \diff \! \tau \\
&\leq \eta \int_0^t \left| \partial_x^2 s_1(\tau,\theta) \right| \left| x_{c,1} - x_{c,2} \right| +
\left| \partial_x s_1(\tau, x_{c,2}) - \partial_x s_2(\tau x_{c,2}) \right| \diff \! \tau \\
&\leq \eta \int_0^t (M_2 + 1) F(\tau) \diff \! \tau.
\end{split}
\]
Hence, for some constant $C$,
\[
F(t) \leq \int_0^t C F(\tau) \diff \! \tau,
\]
which allows us to conclude that $F(t) \equiv 0$ by the integral form of Gr\"onwall's lemma. This establishes uniqueness.\\
\end{proof}

\begin{prop}\label{prop:regular}
The solution pair $(s(t,x), x_c(t))$ constructed in Proposition \ref{prop:exists} is in fact a \emph{strong solution}, meaning that
for every integer $0 \leq m \leq \lfloor k/2 \rfloor$, we have $\partial_t^m s \in L^\infty_t W^{k-2m,\infty}_x$, and that $x_c \in W^{\lceil k/2 \rceil,\infty}(\mathbb{R}_+)$, with bounds that are uniform in time.
\end{prop}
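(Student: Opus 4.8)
The plan is to bootstrap regularity in time from the uniform spatial estimate $\|s\|_{W^{k,\infty}_x}(t)\le M_k$ of Proposition \ref{prop:exists}. Two mechanisms are available: equation \eqref{eq:mat} trades two spatial derivatives of $s$ for one time derivative, while the ODE $\dot x_c=-\eta\, s_x(t,x_c(t))$ gains one derivative of $x_c$. The two must be run together in a single induction, since each $\partial_t$ hitting the source $g(x-x_c(t))$ produces derivatives of $x_c$, and each $\partial_t$ hitting $x_c$ produces space-time derivatives of $s$ evaluated along the curve.

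Concretely I would prove, by induction on $m=0,1,\dots,\lfloor k/2\rfloor$ and performing (a) before (b) at each level, the two assertions: (a) $\partial_t^m s\in L^\infty_t W^{k-2m,\infty}_x$; and (b), provided $m+1\le\lceil k/2\rceil$, $x_c^{(m+1)}\in L^\infty(\mathbb{R}_+)$; both with constants depending only on $M_0,\dots,M_{k+1}$ and $\eta$, hence uniform in $t$. For $m=0$ both are immediate: (a) is Proposition \ref{prop:exists}, and (b) is the boundedness of $\dot x_c=-\eta s_x(t,x_c(t))$, which holds since $k\ge2$. Once the induction is complete, collecting (b) over $0\le m\le\lfloor k/2\rfloor$ together with $\lfloor(k-1)/2\rfloor+1=\lceil k/2\rceil$ gives $x_c\in W^{\lceil k/2\rceil,\infty}(\mathbb{R}_+)$, which combined with (a) is the claim.

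For the inductive step, first prove (a): applying $\partial_t^{m-1}$ to $\partial_t s=-s+\partial_{xx}^2 s+g(x-x_c(t))$ gives
\[
\partial_t^m s=-\,\partial_t^{m-1}s+\partial_{xx}^2\partial_t^{m-1}s+\partial_t^{m-1}\bigl[g(x-x_c(t))\bigr].
\]
By (a) at level $m-1$ the first two terms lie in $W^{k-2m+2,\infty}_x\subset W^{k-2m,\infty}_x$, and by the Fa\`a di Bruno formula in $t$ the last term is a finite sum of $g^{(p)}(x-x_c(t))\,Q_p\bigl(\dot x_c(t),\dots,x_c^{(m-1)}(t)\bigr)$ with $p\le m-1$ and $Q_p$ a universal polynomial; its $x$-derivatives of order $\le k-2m$ are bounded using $g\in W^{k+1,\infty}$ together with the boundedness of $\dot x_c,\dots,x_c^{(m-1)}$ from (b) at earlier levels, so $\partial_t^m s\in L^\infty_t W^{k-2m,\infty}_x$. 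Next prove (b): assuming $m+1\le\lceil k/2\rceil$ and differentiating $\dot x_c=-\eta s_x(t,x_c(t))$ a further $m$ times, the chain rule for $t\mapsto u(t,x_c(t))$ with $u=\partial_x s$ writes $x_c^{(m+1)}(t)$ as a finite sum of terms
\[
(\partial_t^a\partial_x^{p+1}s)(t,x_c(t))\;x_c^{(j_1)}(t)\cdots x_c^{(j_p)}(t),\qquad a+j_1+\cdots+j_p=m,\ \ j_i\ge1 .
\]
Here each $x_c^{(j_i)}$ is bounded since $j_i\le m$ (by (b) at earlier levels and the base case), and $\partial_t^a s\in W^{k-2a,\infty}_x$ by (a), valid for every $a\le m$ (the case $a=m$ just established), so $(\partial_t^a\partial_x^{p+1}s)(t,x_c(t))$ is bounded once $p+1\le k-2a$. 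Since $j_i\ge1$ forces $p\le j_1+\cdots+j_p=m-a$, we have $p+1\le m-a+1$, and $m-a+1\le k-2a$ because $m+a+1\le 2m+1\le k$, the inequality $2m+1\le k$ being precisely the standing hypothesis $m+1\le\lceil k/2\rceil$. Hence all terms are bounded, $x_c^{(m+1)}\in L^\infty(\mathbb{R}_+)$, and the induction closes.

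The only real difficulty is the multi-index bookkeeping in the two Fa\`a di Bruno expansions and verifying that the regularity indices match; in particular, the threshold $m+1\le\lceil k/2\rceil$ is exactly what leaves the one spare spatial derivative $k-2m\ge1$ needed for $\partial_t^m s_x(t,x_c(t))$ to be well defined. A minor preliminary point is to justify that $s$ is genuinely differentiable in $t$ with values in $W^{k-2,\infty}_x$, so that iterating $\partial_t$ on \eqref{eq:mat} and on the ODE is legitimate; this follows from the Duhamel representation \eqref{eq:strong} together with $x_c\in C^1$ and $g\in C^\infty$.
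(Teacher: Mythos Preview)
Your proof is correct and follows essentially the same inductive bootstrap as the paper: trade each $\partial_t$ for two $\partial_x$'s via the equation, and track the Fa\`a di Bruno terms arising from $g(x-x_c(t))$ and from differentiating $\dot x_c=-\eta s_x(t,x_c(t))$. The only cosmetic difference is that the paper differentiates the Duhamel representation \eqref{eq:strong} (so the linear part appears as $e^{-t}(\Delta-1)^m(h_t\ast s_0)$ plus a time integral) whereas you differentiate the PDE \eqref{eq:mat} directly; the combinatorics and the index-counting that make the induction close are the same.
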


\begin{proof}
We differentiate in $t$ the expression for $s(t,x)$ from \eqref{eq:strong}, which gives
\[
\partial_t s = e^{-t} (\Delta-1) (h_t \ast s_0) + g(x-x_c(t)) + \int_0^t e^{-(t-\tau)} (\Delta - 1) (h_{t-\tau} \ast g)(x-x_c(\tau)) \diff \! \tau.
\]
Since $s_0 \in W^{k,\infty}$ and $g \in W^{k+1,\infty}$, it follows that $\partial_t s \in L^\infty_t W^{k-2,\infty}_x$.\\
\\
The essence of the proof is that we repeat this process to take higher order time derivatives, but the expressions become more complicated
once factors of $\dot{x}_c$ start to appear. But each such factor is (by \eqref{eq:strong})
replaced with $-\eta s_x(t,x_c(t))$, resulting in terms that are lower-order and bounded in $L^\infty_x$. To be precise, we must observe that
\[
\partial_t \left( (\partial_t^l \partial_x^{l'} s)(t,x_c(t)) \right) = (\partial_t^{l+1} \partial_x^{l'} s)(t,x_c(t))
-\eta  \partial_x s(t,x_c(t)) (\partial_t^l \partial_x^{l'+1} s)(t,x_c(t)).
\]
The proof then proceeds by induction on $m$, since
\[
\partial_t^m s = e^{-t} (\Delta-1)^m(h_t \ast s_0) + \sum_{\substack{i+j+l =m-1 \\ j \geq 1}} \phi_{i,j,l}(t,x) +
\int_0^t e^{-(t-\tau)} (\Delta-1)^m (h_{t-\tau} \ast g)(x-x_c(\tau)) \diff \! \tau,
\]
where
\[
\phi_{i,j,l} = c_{i,j,l} (\Delta-1)^i g^{(j)}(x-x_c(t)) \left( \sum_{l'} \psi_{l',l} \right).
\]
Here, each term $\psi_{l',l}$ is a product with at most $j+l$ factors of the form $(\partial_t^\alpha \partial_x^\beta s)(t,x_c(t))$.
For each of these factors, $\alpha + \beta \leq l \leq m-2$.\\
\\
The first term is clearly in $L^\infty_x$ (uniformly in $t$) because $s_0 \in W^{k,\infty}$ and we assume that $2m \leq k$. For the third term,
$(\Delta-1)^m (h_{t-\tau} \ast g)$ is also bounded in $L^\infty_x$ uniformly in time, and the integral in $\tau$ remains bounded in time due
to the exponential factor $e^{-(t-\tau)}$.\\
\\
For the complicated middle term, our inductive hypothesis states that $\partial_t^{m'} \partial_x^{k-2m'} s \in L^\infty_x$,
for every $m' < m$. Taking $m' = \alpha$, every factor comprising $\psi_{l',l}$ will belong to $L^\infty_x$ provided that $\beta \leq k-2\alpha$.
But $\beta \leq m-2-\alpha \leq k-2\alpha$ because $\alpha < m \leq \lfloor k/2 \rfloor$. This proves the regularity claimed for $s$,
up to $m = \lfloor k/2 \rfloor$.\\
\\
For the regularity of the cell-cluster's trajectory, we differentiate the expression for $x_c$ in \eqref{eq:strong} repeatedly to get
\[
\frac{\D^m}{\D t^m} x_c(t) = \sum_{l'} \tilde{\psi}_{l'},
\]
where each term $\tilde{\psi}_{l'}$ is similarly a product of at most $m$ factors of the form $(\partial_t^\alpha \partial_x^\beta s)(t,x_c(t))$.
In fact, these terms are precisely of the same form as $\psi_{l',m}$, which one would encounter in the expression for $\partial_t^{m+2} s$,
so that $\alpha + \beta \leq m$. However, the first time derivative on $s$ does not appear until $m=2$, so that we also have
$\alpha \leq m-1$. Then every factor comprising $\tilde{\psi}_{l'}$ will belong to $L^\infty_x$ provided that $\beta \leq k-2\alpha$,
which still holds because $\beta \leq m-\alpha \leq k-2\alpha$ since $\alpha < m \leq \lfloor k/2 \rfloor$. Lastly, this bound will
hold for all $m$ up to a maximum of $\lceil k/2 \rceil$, which proves the regularity claimed for $x_c$.\\
\end{proof}

The last part of Theorem \ref{THM1} follows from the formulas for $s$ and $x_c$ in \eqref{eq:strong}. If one does not care about uniformity in time,
then $h_t \ast s_0$ is already $C^\infty$ for all $t>0$. While the same is true for $h_{t-\tau} \ast g(x-x_c(\tau))$, the integration in $\tau$
prevents that quantity from being automatically $C^\infty$. Indeed, according to \eqref{HeatEstimate}, $\| h_{t-\tau} \ast g \|_{W^{k,\infty}}$
will fail to be integrable as $\tau$ approaches $t$. But if we assume that $g \in C^\infty$, we can indeed conclude that $s \in C^\infty$.
From there, likewise $x_c \in C^\infty$.

\section{Stability of the Traveling Pulse}\label{sec:stab_tw}

Here we investigate small perturbations of the traveling pulse $(\bar{s}(x-vt-x_0), vt+x_0)$. We assume that $s_0(x) = \bar{s}(x) + z_0(x)$ and that
$x_c(0) = 0$, centering the problem to simplify the notation. We observe that if the pair $(z,y)$ solves the system
\begin{equation}\label{eq:perturb}
\begin{split}
&\partial_t z(t,x) + z(t,x) - \partial_{xx}^2 z(t,x) = -\dot{y}(t) \bar{s}'(x-vt+y(t)) \\
&\quad\quad \dot{y}(t) = \eta \partial_x z(t,vt-y(t))
\end{split}
\end{equation}
then the pair $(s(t,x), x_c(t)) := \left( \bar{s}(x-vt+y(t)) + z(t,x) , vt - y(t) \right)$ will solve \eqref{eq:mat}. That is, we have found
the system for the perturbations around the traveling pulse. To establish stability, we must prove that these perturbations decay to zero;
note that, since a pulse can be translated in the $x$-direction, we expect $z$ and $\dot{y}$ to converge to zero, whereas $y$ will
converge to some constant. To this end, we prove the following.

\begin{prop}\label{prop:stable_pulse}
Let $(z(t,x), y(t))$ solve the system \eqref{eq:perturb}, and assume that $g$ satisfies the following assumption of
Gaussian decay up to second order:
\begin{equation}\label{eq:g-decay}
\max \left( |g(z)|, |g'(z)|, |g''(z)| \right) \leq M e^{-mz^2}, \text{ for some } M, \ m > 0  \text{ with } m > \bar{C} M^2 \eta^2 / |v|^2.
\end{equation}
Then there exists $\epsilon_0 > 0$ depending on $M$, $m$, $\bar{C}$, and $\eta$ such that, whenever the initial perturbation satisfies
\[
\max \left(\| z_0 \|_{L^\infty}, \| z_0' \|_{L^\infty}, |y_0| \right) < \epsilon_0,
\]
then for some constants $C',\delta > 0$ and all $t$ we have $\max \left( \| z \|_{L^\infty_x}(t), |\dot{y}(t)| \right) < C' e^{-\delta t}$.
\end{prop}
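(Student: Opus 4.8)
The plan is to set up a Gr\"onwall-type bootstrap on the quantity $F(t) := \|z\|_{L^\infty_x}(t) + \|z_x\|_{L^\infty_x}(t)$, together with $|y(t)|$, using the Duhamel representation for $z$ coming from the heat-with-damping semigroup. Writing $z(t,x) = e^{-t}(h_t \ast z_0)(x) - \int_0^t e^{-(t-\tau)} \dot{y}(\tau)\, (h_{t-\tau} \ast \bar{s}')(x - v\tau + y(\tau))\, \diff \! \tau$, and similarly for $\partial_x z$, the first term decays like $e^{-t}\|z_0\|_{W^{1,\infty}}$ outright. The danger is the Duhamel integral: the convolution $h_{t-\tau}\ast \bar{s}'$ or $h_{t-\tau} \ast \bar{s}''$ loses a power of $(t-\tau)^{-1/2}$ near $\tau = t$ (by \eqref{HeatEstimate}), so I cannot simply bound $\|h_{t-\tau}\ast \bar{s}''\|_{L^\infty}$ by a constant. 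Instead I would split the $\tau$-integral at $\tau = t-1$ (say): on $[t-1,t]$ use $\|h_{t-\tau}\ast \bar{s}'\|_{L^\infty} \le \|\bar{s}'\|_{L^\infty}$ and $\|h_{t-\tau}\ast \bar{s}''\|_{L^\infty}\le \|\bar{s}''\|_{L^\infty}$ directly (no smoothing needed since $\bar{s}\in W^{2,\infty}$ already, by Lemma \ref{LEM:Pulse} since $g$ is smooth with decay), picking up a factor bounded by $\sup_{[t-1,t]}|\dot{y}|$ times a constant; on $[0,t-1]$ use the $e^{-(t-\tau)} \le e^{-1}$ decay and the integrability of $(t-\tau)^{-1/2}$ to absorb everything into $\sup_{[0,t-1]}|\dot y(\tau)|$ against an exponentially weighted kernel.

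The second ingredient is to close the loop on $\dot{y}$. From \eqref{eq:perturb}, $\dot{y}(t) = \eta\, \partial_x z(t, vt - y(t))$, so $|\dot{y}(t)| \le \eta\, \|z_x\|_{L^\infty_x}(t) \le \eta F(t)$; and $|y(t)| \le |y_0| + \int_0^t |\dot{y}(\tau)|\diff \! \tau \le |y_0| + \eta \int_0^t F(\tau)\diff \! \tau$. Feeding $|\dot y| \le \eta F$ back into the Duhamel estimate for $F$, I expect an inequality of the schematic form
\begin{equation*}
F(t) \le C e^{-t} F(0) + C\eta \Bigl( \|\bar{s}'\|_{L^\infty} + \|\bar{s}''\|_{L^\infty} \Bigr) \int_0^t e^{-(t-\tau)/2} F(\tau)\, \diff \! \tau ,
\end{equation*}
where the constants depend on $M$, $m$, $\bar{C}$, $\eta$ through the bound on $\|\bar{s}\|_{W^{2,\infty}}$ (here the hypothesis \eqref{eq:g-decay}, in particular $m > \bar{C}M^2\eta^2/|v|^2$, is what makes $\eta\|\bar{s}''\|_{L^\infty}$ — and hence the coefficient of the integral — strictly less than the decay rate $1/2$ coming from the semigroup, which is the structural reason exponential stability holds rather than mere boundedness). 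Applying a weighted Gr\"onwall (multiply by $e^{\delta t}$ and iterate, or use the generalized Gr\"onwall for convolution inequalities) then yields $F(t) \le C' e^{-\delta t} F(0)$ for some $\delta \in (0, 1/2)$, provided the coefficient of the integral is below $1/2$; this is exactly where the smallness threshold $\epsilon_0$ and the quantitative condition on $m$ enter.

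There is one subtlety I would handle carefully: the estimate as written is \emph{linear} in $F$ and would give global decay for free, but in fact the term $-\dot y(t)\bar{s}'(x-vt+y(t))$ is only mildly nonlinear (it is bilinear in $\dot y$ and in the shift $y$), and more importantly the constant $\|\bar{s}'\|_{W^{1,\infty}}$ is fixed, so the closed inequality really is linear and the smallness of $\epsilon_0$ is needed only to start the argument in a regime where the a priori bound $F \le 1$ (say) holds so that the mean-value-theorem steps — $|\bar{s}'(x - v\tau + y(\tau)) - \bar{s}'(x-v\tau)| \le \|\bar{s}''\|_{L^\infty}|y(\tau)|$, should I need to separate the $y$-shift — are valid with uniform constants, and so that $v = v(\eta)$ from \eqref{eq:v_2} stays bounded away from zero. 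The main obstacle, then, is bookkeeping the near-diagonal singularity of the heat kernel in the Duhamel term and verifying that the resulting effective growth constant is genuinely smaller than the semigroup's contraction rate $1/2$; once that is in hand, the Gr\"onwall argument and the conclusion $\max(\|z\|_{L^\infty_x}(t), |\dot y(t)|) \le C' e^{-\delta t}$ (with $|\dot y| \le \eta F$) follow routinely, and the convergence of $y(t)$ to a constant $y_\infty$ is then immediate from absolute integrability of $\dot y$.
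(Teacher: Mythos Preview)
Your bootstrap framework is reasonable in outline, but there is a genuine gap at the point where you invoke hypothesis \eqref{eq:g-decay}. You assert that the condition $m > \bar{C} M^2 \eta^2 / |v|^2$ is ``what makes $\eta \| \bar{s}'' \|_{L^\infty}$ \ldots\ strictly less than the decay rate $1/2$,'' but you do not justify this, and in fact the parameter $m$ (the Gaussian decay \emph{rate} of $g$) does not enter your estimates at all: you only use the sup bounds $\| \bar{s}' \|_{L^\infty}$ and $\| \bar{s}'' \|_{L^\infty}$, which depend on $M$ and $v$ but are insensitive to how fast $g$ decays in space. Concretely, your Gr\"onwall closes only if $\eta \| \bar{s}'' \|_{L^\infty}$ is below an absolute constant, which is a different condition from \eqref{eq:g-decay} and is not implied by it (for instance, one can make $m$ arbitrarily large while keeping $M$, $\eta$, $v$ fixed, satisfying \eqref{eq:g-decay} without changing $\eta \| \bar{s}'' \|_{L^\infty}$ at all). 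A secondary issue: your worry about a $(t-\tau)^{-1/2}$ singularity is a red herring, since $\bar{s}' \in W^{1,\infty}$ already and $\| h_{t-\tau} \ast \bar{s}'' \|_{L^\infty} \le \| \bar{s}'' \|_{L^\infty}$ with no loss.

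The paper's proof differs from yours precisely in how it exploits \eqref{eq:g-decay}. Rather than bounding the Duhamel integrand by a global sup norm of $\bar{s}''$, the paper first rewrites the Duhamel formula so that $g'$ appears in place of $\bar{s}'$: using the traveling-pulse equation \eqref{eq:traveling} and subtracting off the Duhamel formula for a reference pulse $\bar{s}(x - vt + y(T))$ with \emph{frozen} shift $\tilde{y} = y(T)$, one obtains
\[
\dot{y}(T) = (\text{terms of size } C_1 e^{-T}) + \eta \int_0^T e^{-\tau} \left( (h_\tau \ast g')(v\tau + y(T-\tau) - y(T)) - (h_\tau \ast g')(v\tau) \right) \diff\!\tau .
\]
After the mean value theorem, the integrand contains $(h_\tau \ast g'')(\theta_\tau)$ with $|\theta_\tau| \ge |v|\tau/2$, and now the Gaussian spatial decay of $g''$ gives $|(h_\tau \ast g'')(\theta_\tau)| \le M e^{-m |v|^2 \tau^2 / 4}$. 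This superexponential decay in $\tau$ makes $\int_0^\infty e^{-m|v|^2\tau^2/4} \diff\!\tau \sim (|v|\sqrt{m})^{-1}$, and the closing condition becomes exactly $M\eta/(|v|\sqrt{m}) < \text{const}$, i.e., \eqref{eq:g-decay}. The decay of $\dot{y}$ is then established by a crossing-time argument, and the decay of $\| z \|_{L^\infty}$ follows from a barrier/comparison argument rather than Gr\"onwall. The missing idea in your proposal is this conversion from $\bar{s}'$ to $g'$ in the Duhamel integral, which is what allows the spatial decay rate $m$ to be traded for integrability in $\tau$.
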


We remark that the constant $\bar{C}$ appearing in \eqref{eq:g-decay} does not depend on the initial data or any of the parameters.
The assumptions on $g$ help to control the accumulation of long-range interactions. As will be seen in the proof, the constant $\delta$
can be taken arbitrarily close to $1$, at the expense of making $\epsilon_0$ smaller or $C'$ larger.
We comment that the limiting rate of decay $e^{-t}$ is expected from the "$z_t + z$" appearing in the equation.

\begin{proof}
We remark that, while $h_t$ is defined for positive $t$ as a function, $h_t$ also makes sense as a Fourier multiplier operator
for $t \leq 0$. The proof will require some differential identities involving this operator, which by abuse of notation we will continue to
denote as convolution with $h_t$; i.e., for all $t$
\[
(h_t \ast F)(x) := \frac{1}{2\pi} \int_{\mathbb{R}} e^{-|\xi|^2 t} \hat{F}(\xi) e^{i x \xi} \diff \! \xi.
\]
Our first goal is to prove exponential decay for $|\dot{y}(t)|$. The Duhamel formula obtained directly from \eqref{eq:perturb} would yield that
\[
z(t,x) = e^{-t} (h_t \ast z_0)(x) - \int_0^t \dot{y}(\tau) e^{-(t-\tau)} (h_{t-\tau} \ast \bar{s}')(x-v\tau+y(\tau)) \diff \! \tau .
\]
Unfortunately, it is more difficult to examine the $\bar{s}$ in the integrand.
Instead, we observe that the right hand side of \eqref{eq:perturb} is in fact equal to
\[
-\partial_t \left( \bar{s}(x-vt+y(t)) \right) - v\bar{s}'(x-vt+y(t)) .
\]
Therefore
\[
\begin{split}
e^{\tau} (h_{-\tau} \ast \bar{s}')(x-v\tau+y(\tau)) &= - \left(h_{-\tau} \ast (\partial_\tau e^\tau \bar{s}) \right)(x-v\tau+y) \\
&\quad\quad\quad -v e^{\tau} \left( h_{-\tau} \ast \bar{s}' \right)(x-v\tau+y) +e^\tau \bar{s}(x-v\tau+y) .
\end{split}
\]
As can be verified with a Fourier transform, we have that
\[
h_{-\tau} \ast \partial_\tau (F(\tau,x)) = \partial_\tau \left( h_{-\tau} \ast F(\tau,x) \right) + h_{-\tau} \ast F_{xx}(\tau,x) .
\]
Recalling also \eqref{eq:traveling}, the new Duhamel formula reads
\begin{equation*}
\begin{split}
z(t,x) &= e^{-t} (h_t \ast z_0)(x) - \int_0^t e^{-t} \left( h_{t-\tau} \ast (\partial_\tau e^\tau \bar{s}) \right)(x-v\tau+y(\tau)) \diff \! \tau \\
&\quad\quad\quad + \int_0^t e^{- (t-\tau)} \left( -v (h_{t-\tau} \ast \bar{s}')(x-v\tau+y(\tau)) + (h_{t-\tau} \ast \bar{s})(x-v\tau+y(\tau)) \right) \diff \! \tau \\
&= e^{-t} (h_t \ast z_0)(x) -e^{-t} \left( h_t \ast \int_0^t \partial_\tau \left( e^{\tau} (h_{-\tau} \ast \bar{s})(\cdot-v\tau+y(\tau)) \right) \diff \! \tau \right)(x) \\
&\quad\quad\quad + \int_0^t e^{- (t-\tau)} \left. \left( h_{t-\tau} \ast (\bar{s} - v \bar{s}' - \bar{s}'') \right) \right|_{x-v\tau+y(\tau)} \diff \! \tau \\
&= e^{- t} (h_t \ast z_0)(x) - \bar{s}(x-vt+y(t)) - e^{- t} (h_t \ast \bar{s})(x)\\
&\quad\quad\quad + \int_0^t e^{- (t-\tau)} (h_{t-\tau} \ast g)(x-v\tau+y(\tau)) \diff \! \tau 
\end{split}
\end{equation*}
By \eqref{eq:perturb} and a slight change of variables, we then have
\[
\begin{split}
\frac{\dot{y}(t)}{\eta} &= (h_t \ast z_0')(vt-y(t)) - \bar{s}'(0) + e^{- t} (h_t \ast \bar{s})(vt-y(t)) \\
& \quad\quad\quad +\int_0^t e^{- \tau} (h_\tau \ast g')(v\tau + y(t-\tau)-y(t)) \diff \! \tau .
\end{split}
\]
Note that $-\bar{s}'(0) = v / \eta$, indicating that the above expression, though correct, still fails to capture the perturbative nature of $y$. The
insight that produces a better expression is to fix a $\tilde{y} \in \R$ and examine the Duhamel formula for the function $\tilde{s}(t,x) :=
\bar{s}(x-vt+\tilde{y})$. This yields
\[
\tilde{s}'(t,x) = e^{-t} (h_t \ast \tilde{s}')(0,x) + \int_0^t e^{- (t-\tau)} (h_{t-\tau} \ast g')(x-v\tau+\tilde{y}) \diff \! \tau .
\]
Fixing a $T>0$ and setting $\tilde{y} = y(T)$, we therefore have that
\[
\bar{s}'(x-vt+y(T)) = e^{- t} (h_t \ast \bar{s}')(x-y(T)) + \int_0^t e^{- (t-\tau)} (h_{t-\tau} \ast g')(x-v\tau+y(T)) \diff \! \tau.
\]
Note the different places where $t$ or $T$ appear. The expression would be false if $T$ were not fixed, but instead replaced with $t$. This
gives us a refined Duhamel expression for $\partial_x z$ at time $T$:
\[
\begin{split}
\partial_x z(T,x) &= e^{- T} (h_T \ast z_0')(x) + e^{- T} \left( (h_T \ast \bar{s}')(x)-(h_T \ast \bar{s}')(x-y(T)) \right) \\
&\quad\quad+ \int_0^T e^{- (T-\tau)} \left( (h_{T-\tau} \ast g')(x-v\tau+y(\tau)) - (h_{T-\tau} \ast g')(x-v\tau+y(T)) \right) \diff \! \tau ,
\end{split}
\]
and likewise for $\dot{y}(T)$:
\begin{equation}\label{eq:dot_y_useful}
\begin{split}
\dot{y}(T) &= \eta e^{- T} \left( (h_T \ast z'_0)(Tv-y(T)) + (h_T \ast \bar{s}')(Tv-y(T)) - (h_T \ast \bar{s}')(Tv) \right) \\
&\quad\quad+ \eta \int_0^T e^{- \tau} \left( (h_\tau \ast g')(v\tau + y(T-\tau) - y(T)) - (h_\tau \ast g')(v\tau) \right) \diff \! \tau .
\end{split}
\end{equation}
To proceed, we observe that there exists some $C_1 > 0$ depending only on $z_0$, $\eta$ and $g$ such that
\begin{equation}\label{eq:dot_y_first_part}
\left| \eta e^{- T} \left( (h_T \ast z'_0)(Tv-y(T)) + (h_T \ast \bar{s}')(Tv-y(T)) - (h_T \ast \bar{s}')(Tv) \right) \right| \leq C_1 e^{-T}.
\end{equation}
The dependence of $C_1$ on $g$ is implicit through the dependence on $\bar{s}$.\\
\\
Now, to prove the exponential decay of $\dot{y}$, we first assume that there exists a $t_1 > 0$ and a constant $a \in (0,1)$ such that
\begin{equation}\label{eq:y-induct}
|\dot{y}(t)| < a e^{-t} \ \ \text{ for all } \ \ t \in [0,t_1) .
\end{equation}
Choosing $\epsilon_0$ small enough (in terms of $a$) guarantees that this holds at time $t=0$ (using, say, \eqref{eq:dot_y_useful}).
By smoothness of solutions, such a positive $t_1$ always exists, though any lower bound must depend on the size of the initial data $(z_0, z_0')$.
We assume that $t_1$ is in fact the latest time where \eqref{eq:y-induct} holds. That is, $t_1$ is a crossing point where $|\dot{y}(t_1)| = a e^{-t}$.
We then let $T=t_1$ in \eqref{eq:dot_y_useful}. By the mean value theorem and \eqref{eq:dot_y_first_part}, we have
\begin{equation}\label{eq:dot_y_total}
|\dot{y}(T)| \leq C_1 e^{-T} + \eta \int_0^T e^{-\tau}\left| (h_\tau \ast g'')(\theta_\tau) \right| \left|y(T-\tau) - y(T) \right| \diff \! \tau
\end{equation}
where $\theta_\tau$ is some point between $v\tau + y(T-\tau) - y(T)$ and $v\tau$. Note that, by \eqref{eq:y-induct},
$|y(T-\tau) - y(T)| \leq ae^{-T} (e^\tau - 1)$. Choosing $a$ small enough, we have that this is smaller than $|v|/2$. Hence, $|\theta_\tau| \geq v\tau / 2$.\\
\\
Moreover, given the Gaussian bounds on $g''$ from \eqref{eq:g-decay}, we also obtain pointwise Gaussian decay for $h_\tau \ast g''$.
Indeed, since we assume that $\tau >0$,
\[
\begin{split}
\left| h_\tau \ast g''(x) \right| &\leq \int \frac{1}{\sqrt{4\pi \tau}} e^{-\frac{|y-x|^2}{4\tau}} \left| g''(y) \right| \diff \! y \\
&\leq \frac{M}{\sqrt{4\pi \tau}} \int \exp\left(-\frac{|y-x|^2 + 4\tau m x^2}{4\tau} \right) \diff \! y \\
&= \frac{M}{\sqrt{4\pi \tau}} \int \exp\left( -\frac{|y-x/(1+4\tau m)|^2}{4\tau / \sqrt{1+4\tau m}}-\frac{mx^2}{1+4\tau m} \right) \diff \! y\\
&= \frac{M}{(1+4\tau m)^{1/4}} \exp\left(-\frac{m x^2}{1+4\tau m} \right) \leq M e^{-mx^2} .
\end{split}
\]
Therefore $|(h_\tau \ast g'')(\theta_\tau)| \leq M \exp(-m|v|^2\tau^2 / 4)$, and \eqref{eq:dot_y_total} becomes
\[
\begin{split}
|\dot{y}(T)| &\leq C_1 e^{-T} + a M \eta e^{-T} \int_0^T e^{-\frac{m|v|^2 \tau^2}{4}} \left( 1 - e^{-\tau} \right) \diff \! \tau \\
&\leq e^{-T} \left( C_1 + aM\eta \int_0^\infty e^{-\frac{m|v|^2 \tau^2}{4}} \diff \! \tau \right)
\leq e^{-T} \left( C_1 + a \frac{ M \eta C_2}{|v| \sqrt{m}} \right) ,
\end{split}
\]
for some fixed $C_2 > 0$ which does not depend on $g$, the initial data, or any of the parameters.
If we have $\bar{C} > 9 C_2^2$ in \eqref{eq:g-decay}, and that the initial data are
sufficiently small to make $C_1 < a/3$, then we conclude that
\[
|\dot{y}(t_1)| \leq \frac 2 3 a e^{-t_1} ,
\]
which contradicts our crossing-time assumption. Thus, \eqref{eq:y-induct} holds for all $t > 0$.\\
\\
The exponential decay of $z$ is then almost immediate from \eqref{eq:perturb} and a barrier argument. 
For fixed $\delta \in (0,1)$ and any $\epsilon > 0$, define
\[
\phi^\epsilon(t,x) = C' e^{-\delta t} + \epsilon (x^2+2).
\]
If we assume that $\epsilon_0 < C'$, then $z_0(x) < \phi^\epsilon(0,x)$. We claim that $z(t,x) < \phi^\epsilon(t,x)$ for all $t$, $x$,
and $\epsilon$. Since $\epsilon>0$ was arbitrary, and since our argument will hold equally well for $-z(t,x)$, this would imply that
$|z(t,x)| < C' e^{-\delta t}$, which (along with \eqref{eq:y-induct}) completes the proof.\\
\\
To prove that claim, we proceed by contradiction. From Proposition \ref{prop:exists}, we know that solutions
to \eqref{eq:mat} are bounded in $x$ uniformly. However $\phi^\epsilon$ grows to $+\infty$ as $|x| \rightarrow \infty$.
Hence, if the claim were false, there would have to exist a first time $t_1 >0$ and a crossing point $x_1$; i.e., $z(t,x) < \phi^\epsilon(t,x)$
for all $t < t_1$, $z(t_1, x_1) = \phi^\epsilon(t_1,x_1)$, and $z(t_1, x) \leq \phi^\epsilon(t_1, x)$.
If we define $w(t,x) := \phi^\epsilon(t,x) - z(t,x)$, then
\begin{equation}\label{eq:w-crossing}
\partial_t w(t_1,x_1) \leq 0 \ \text{ and } \ \partial_{xx}^2 w(t_1,x_1) \geq 0.
\end{equation}
Using \eqref{eq:perturb}, we see that $w$ satisfies the equation
\[
w_t + \delta w - w_{xx} = \epsilon \delta x^2 + (1-\delta) z + \dot{y}(t) \bar{s}'(x-vt+y(t)).
\]
If we evaluate at $(t_1,x_1)$ and use \eqref{eq:w-crossing}, we get
\[
\begin{split}
0 &\geq w_t(t_1,x_1) \geq w_t(t_1,x_1) - w_{xx}(t_1, x_1)
\geq \epsilon \delta x_1^2 + (1-\delta)\phi^\epsilon(t_1,x_1) - |\dot{y}(t_1)| \| \bar{s}' \|_{L^\infty} \\
& \geq C' (1-\delta) e^{-\delta t_1} - a \| \bar{s}' \|_{L^\infty} e^{-t_1} > 0 ,
\end{split}
\]
provided that $a \| \bar{s} \|_{W^{1,\infty}} < C' (1-\delta)$. But this is clearly a contradiction, so no such
crossing point could have existed, proving the claim.\\
\end{proof}

Taking $k$ derivatives in $x$ of \eqref{eq:perturb}, we see that $\partial_x^k z$ satisfies the nearly identical equation
\[
\partial_t (\partial_x^k z) + \partial_x^k z - \partial_x^2 (\partial_x^k z) = -\dot{y}(t) \bar{s}^{(k+1)}(x-vt+y(t)) .
\]
We can then repeat the last part of the proof of Proposition \ref{prop:stable_pulse} with the same barrier to conclude exponential
decay of higher-order $x$-derivatives of $z$, so long as we can guarantee a uniform bound-in-time bound on $\| z \|_{W^{k,\infty}}$.
This can be accomplished by assuming the necessary regularity of $z_0$ and $g$, using Proposition \ref{prop:exists}. We summarize this
with the following.

\begin{coro}\label{cor:higher_decay}
With the same assumptions as Proposition \ref{prop:stable_pulse}, further assume that $z_0 \in W^{k,\infty}$ and $g \in W^{k+1,\infty}$
(which in particular means that $\bar{s} \in W^{k+2,\infty}$). Then, with the same $\epsilon_0$ but a possibly larger constant $C'$, we have
\[
\max \left( \| z \|_{L_x^\infty}(t) , \| z \|_{W^{k,\infty}_x}(t), |\dot{y}(t)| \right) \leq C' e^{-\delta t} .
\]
\end{coro}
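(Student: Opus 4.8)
The plan is to reuse the barrier argument from the end of the proof of Proposition~\ref{prop:stable_pulse}, now applied to each spatial derivative $\partial_x^j z$ with $1 \le j \le k$. As recorded just above the statement, $u := \partial_x^j z$ solves the damped heat equation $\partial_t u + u - \partial_{xx}^2 u = -\dot y(t)\,\bar s^{(j+1)}(x-vt+y(t))$, so the only new feature compared with the $j=0$ case is the forcing term $-\dot y\,\bar s^{(j+1)}$. Two facts make this harmless: the decay $|\dot y(t)| \le a e^{-t}$ was already established for \emph{all} $t$ in \eqref{eq:y-induct} (under the hypotheses of Proposition~\ref{prop:stable_pulse}, and independently of the added regularity), and $\bar s^{(j+1)} \in L^\infty$ for every $j \le k+1$. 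The latter follows from the elliptic identity $\bar s'' = \bar s - v\bar s' - g$ in \eqref{eq:traveling}: bootstrapping it shows $\bar s \in W^{k+2,\infty}$ as soon as $g \in W^{k+1,\infty}$, which is the parenthetical remark in the statement.

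The one genuinely new ingredient is a \emph{uniform-in-time} bound $\|z\|_{W^{k,\infty}_x}(t) \le \widetilde M_k$. This is exactly what the extra hypotheses $z_0 \in W^{k,\infty}$ and $g \in W^{k+1,\infty}$ buy: applying Proposition~\ref{prop:exists} with $s_0 = \bar s + z_0 \in W^{k,\infty}$ gives, via \eqref{eq:Picard-ball}, a bound $\|s\|_{W^{k,\infty}_x}(t) \le M_k$ that does not deteriorate in $t$, and subtracting the shifted traveling pulse $\bar s(\cdot - vt + y(t)) \in W^{k,\infty}$ yields $\|z\|_{W^{k,\infty}_x}(t) \le \widetilde M_k := M_k + \|\bar s\|_{W^{k,\infty}}$ for all $t$. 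In particular each $\partial_x^j z$, $0 \le j \le k$, is bounded in $L^\infty_x$ uniformly in time; this is precisely what prevents a crossing of the barrier at spatial infinity and what controls the zeroth-order term in the crossing inequality.

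With these in hand, for each $j$ I would run the barrier argument unchanged: fix $\delta \in (0,1)$, put $\phi^\epsilon(t,x) = C' e^{-\delta t} + \epsilon(x^2+2)$ with $C'$ chosen large enough that $C' > \|z_0^{(j)}\|_{L^\infty}$ (so $u(0,\cdot) < \phi^\epsilon(0,\cdot)$) and $a\|\bar s^{(j+1)}\|_{L^\infty} < C'(1-\delta)$. If $u(t,x) \ge \phi^\epsilon(t,x)$ somewhere, then since $u$ is bounded in $x$ uniformly in $t$ while $\phi^\epsilon$ grows quadratically, there is a first, finite crossing point $(t_1,x_1)$ with $t_1 > 0$; at that point $w := \phi^\epsilon - u$ satisfies $\partial_t w \le 0$ and $\partial_{xx}^2 w \ge 0$, and evaluating $w_t + \delta w - w_{xx} = \epsilon\delta x^2 + (1-\delta)u + \dot y(t)\bar s^{(j+1)}(x-vt+y(t))$ at $(t_1,x_1)$ gives, exactly as in Proposition~\ref{prop:stable_pulse},
\[
0 \ge w_t(t_1,x_1) \ge w_t(t_1,x_1) - w_{xx}(t_1,x_1) \ge (1-\delta)\phi^\epsilon(t_1,x_1) - a\|\bar s^{(j+1)}\|_{L^\infty} e^{-t_1} \ge \big(C'(1-\delta) - a\|\bar s^{(j+1)}\|_{L^\infty}\big) e^{-\delta t_1} > 0,
\]
a contradiction; letting $\epsilon \to 0$ and repeating for $-u$ gives $\|\partial_x^j z\|_{L^\infty_x}(t) \le C' e^{-\delta t}$. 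Taking the maximum over $0 \le j \le k$ (the case $j = 0$ being Proposition~\ref{prop:stable_pulse} itself), recalling $\|z\|_{W^{k,\infty}} = \|z\|_{L^\infty} + \|\partial_x^k z\|_{L^\infty}$, and noting $|\dot y(t)| \le a e^{-t} \le C' e^{-\delta t}$, yields the stated estimate after absorbing the finitely many constants into one $C'$ (now depending on $k$, $z_0$ and $g$); crucially $\epsilon_0$ is untouched, since it was fixed solely by the requirement that $|\dot y|$ decay. I do not expect any real obstacle here beyond bookkeeping: the whole corollary is a mechanical upgrade of Proposition~\ref{prop:stable_pulse}, the sole subtlety being that it is the uniform-in-time control of the derivatives of $z$ — not their instantaneous smoothing, which holds for any $t>0$ — that forces the added hypotheses.
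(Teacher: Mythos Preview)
Your proposal is correct and follows essentially the same approach as the paper: differentiate \eqref{eq:perturb} to get the same damped heat equation for $\partial_x^j z$ with forcing $-\dot y\,\bar s^{(j+1)}$, invoke the uniform-in-time $W^{k,\infty}$ bound on $z$ coming from Proposition~\ref{prop:exists}, and rerun the barrier argument with the identical barrier $\phi^\epsilon$. You have in fact supplied more detail than the paper does (the bootstrapping for $\bar s \in W^{k+2,\infty}$, the explicit reason $\epsilon_0$ is unchanged, and the role of the uniform bound in locating the crossing point), but the skeleton is the same.
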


\section{Stability of the Stationary State}\label{sec:stab_stat}
Here we analyze the stationary solution $\bar{s}_0$ from \eqref{eq:stationary}. Since a traveling pulse is only guaranteed for
sufficiently large stiffness $\eta$, we expect that the stationary state should be stable in the case of low stiffness. More than this,
we are able to prove that the family of stationary states created by translations of the profile $\bar{s}_0$ is exponentially attracting for
the dynamics of \eqref{eq:mat}.

\begin{prop}\label{prop:stable_stat}
Let $(s(t,x),x_c(t))$ solve the system \eqref{eq:mat} with initial data $(s_0(x), x_0)$, $s_0 \in W^{k,\infty}$, $g \in W^{k+1,\infty}$, $k \geq 2$.
Then, for any $\delta \in (0,1)$, there exists $C_1$, $\eta_0$, and $\bar{x}$ such that for all $\eta < \eta_0$ we have
\begin{equation}\label{eq:stable_stat}
\| s(t,x) - \bar{s}_0(x-\bar{x}) \|_{W^{1,\infty}_x}(t) \leq C_1 e^{-\delta t} .
\end{equation}
\end{prop}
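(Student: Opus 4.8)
The plan is to mimic the structure of the proof of Proposition \ref{prop:stable_pulse}, but in the simpler setting where the limiting object is the stationary state $\bar{s}_0$ rather than a traveling pulse. The key conceptual point is that, for small stiffness $\eta$, the cell-cluster trajectory $x_c(t)$ should converge to some limit $\bar{x}$, and then $s(t,x)$ should converge to $\bar{s}_0(x-\bar{x})$ at rate $e^{-\delta t}$ for any $\delta < 1$ (the $e^{-t}$ limit again coming from the damping term ``$s_t + s$'').

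First I would derive a refined Duhamel formula for $s(t,x) - \bar{s}_0(x-x_c(t))$ analogous to the one used for the pulse. Starting from the Duhamel representation in \eqref{eq:strong}, $s(t,x) = e^{-t}(h_t \ast s_0)(x) + \int_0^t e^{-(t-\tau)}(h_{t-\tau}\ast g)(x-x_c(\tau))\diff\!\tau$, and using that $\bar{s}_0$ satisfies \eqref{eq:stationary} — hence has Duhamel representation $\bar{s}_0(x) = e^{-t}(h_t \ast \bar{s}_0)(x) + \int_0^t e^{-(t-\tau)}(h_{t-\tau}\ast g)(x)\diff\!\tau$ for every $t$ — I would fix a terminal time $T$, set $\bar{x} := x_c(T)$ (as a provisional choice, following the ``fix $T$, not $t$'' trick from the pulse proof), and subtract. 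This gives
\[
\partial_x s(T,x) - \bar{s}_0'(x-x_c(T)) = e^{-T}\big((h_T \ast s_0')(x) - (h_T\ast\bar{s}_0')(x-x_c(T))\big) + \int_0^T e^{-(T-\tau)}\big((h_{T-\tau}\ast g')(x-x_c(\tau)) - (h_{T-\tau}\ast g')(x-x_c(T))\big)\diff\!\tau.
\]
Then, evaluating at $x = x_c(T)$ and using $\dot{x}_c(T) = -\eta\,\partial_x s(T,x_c(T))$ together with $\bar{s}_0'(0)=0$ (evenness of $\bar{s}_0$), one obtains
\[
\frac{\dot{x}_c(T)}{-\eta} = e^{-T}\big((h_T \ast s_0')(x_c(T)) - (h_T\ast\bar{s}_0')(0)\big) + \int_0^T e^{-\tau}\big((h_\tau\ast g')(x_c(T)-x_c(T-\tau)) - (h_\tau\ast g')(0)\big)\diff\!\tau.
\]
The first term is $O(e^{-T})$ uniformly. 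For the integral term I would apply the mean value theorem to $h_\tau \ast g'$, producing a factor $|x_c(T)-x_c(T-\tau)| \leq \int_{T-\tau}^T |\dot{x}_c| = \eta\int_{T-\tau}^T |\partial_x s(\sigma, x_c(\sigma))|\diff\!\sigma$, which is bounded by $\eta \tau \| s\|_{W^{1,\infty}}$ via the uniform-in-time bound $M_1$ from Proposition \ref{prop:exists}. Combined with a Gaussian-type bound $\| h_\tau \ast g''\|_{L^\infty} \lesssim M$ (coming from $g \in W^{2,\infty}$, which is weaker than the Gaussian decay needed in the pulse case since here there is no moving-frame drift to fight), this yields a bound of the form $|\dot{x}_c(T)| \leq C_1 e^{-T} + C\eta^2\int_0^T e^{-\tau}\tau\,\sup_{\sigma\le T}|\dot{x}_c(\sigma)e^{\sigma}|\,e^{-T}\diff\!\tau$, and a bootstrap/continuity argument (exactly as with \eqref{eq:y-induct}) forces $|\dot{x}_c(t)| \leq C_1' e^{-t}$ provided $\eta < \eta_0$ with $\eta_0$ chosen so that $C\eta_0^2 \int_0^\infty e^{-\tau}\tau\,\diff\!\tau < 1$.

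Once $|\dot{x}_c(t)| \leq C_1' e^{-t}$ is established, $x_c(t)$ is Cauchy and converges to a genuine limit $\bar{x} := \lim_{t\to\infty} x_c(t)$, with $|x_c(t) - \bar{x}| \leq C_1' e^{-t}$. I would then prove the $W^{1,\infty}$ decay of $s(t,x) - \bar{s}_0(x-\bar{x})$ by a barrier argument identical in spirit to the end of the proof of Proposition \ref{prop:stable_pulse}: writing $w(t,x) = \bar{s}_0(x-\bar{x}) - s(t,x)$ (and $\pm w$), one checks that $w$ satisfies $w_t + w - w_{xx} = g(x-\bar{x}) - g(x-x_c(t))$, whose right-hand side is bounded by $\| g'\|_{L^\infty}|x_c(t)-\bar{x}| \leq \| g'\|_{L^\infty} C_1' e^{-t}$; comparing against the barrier $\phi^\epsilon(t,x) = C_1 e^{-\delta t} + \epsilon(x^2+2)$ and using the uniform-in-$x$ bound on $s$ from Proposition \ref{prop:exists} to rule out a crossing at spatial infinity, one gets $|s(t,x) - \bar{s}_0(x-\bar{x})| \leq C_1 e^{-\delta t}$. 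Differentiating \eqref{eq:mat} once in $x$ and repeating the barrier argument on $\partial_x s - \bar{s}_0'(\cdot-\bar{x})$ (whose forcing is now $\dot{x}_c(t)g'(x-x_c(t)) + [\text{lower order}]$, controlled by $C_1' e^{-t}$ and the previous step) gives the matching bound on the first derivative, completing \eqref{eq:stable_stat}.

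The main obstacle I anticipate is the same subtlety that drove the pulse proof: the naive Duhamel formula for $\dot{x}_c$ does not manifestly see the decay, because $\bar{s}_0'(0) = 0$ is only recovered after one correctly subtracts the Duhamel representation of $\bar{s}_0(\cdot - x_c(T))$ with the \emph{terminal} time $T$ frozen in the shift — replacing $x_c(T)$ by $x_c(t)$ inside that representation would make the identity false. Getting this bookkeeping right, and then closing the bootstrap with an $\eta_0$ that is explicit in terms of the universal constant and $\| g\|_{W^{2,\infty}}$, is where the real care is needed; everything after the decay of $\dot{x}_c$ is a routine repetition of the maximum-principle barrier already used in Section \ref{sec:stab_tw}.
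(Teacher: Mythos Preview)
Your approach is genuinely different from the paper's. The paper does \emph{not} revisit the Duhamel machinery of Section~\ref{sec:stab_tw}; instead it writes $s(t,x)=\bar{s}_0(x+y(t))+z(t,x)$, sets $w=\partial_x z$, and runs a barrier argument directly on $w$. The key self-referential observation is that the forcing $-\dot y(t)\bar{s}_0''$ is itself $-\eta\, w(t,-y(t))\,\bar{s}_0''$, so at a crossing point $(t_1,x_1)$ of the barrier $\phi^\epsilon(t,x)=Ce^{-\delta t}(1+\epsilon x^2)$ one can bound $w(t_1,-y(t_1))\le \phi^\epsilon(t_1,-y(t_1))$ and close using only the coarse bound $|y(t)|\le \eta M_1 t$; sending $\epsilon\to 0$ gives $\|\partial_x z\|_{L^\infty}\le Ce^{-\delta t}$, from which $|\dot y|$ decays and $\bar x$ exists. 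Note the barrier is \emph{multiplicative} here, unlike the additive one in Section~\ref{sec:stab_tw}; this is essential for the self-referential step. Your route (Duhamel for $\dot x_c$, then barrier for $s-\bar s_0$) reverses the order and is closer in spirit to the pulse proof.

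There is, however, a concrete error in your bootstrap: you cannot close it at rate $e^{-t}$. With the hypothesis $|\dot x_c(t)|\le a e^{-t}$ you get $|x_c(T)-x_c(T-\tau)|\le a e^{-T}(e^\tau-1)$, and since here there is no drift $v\tau$ pushing the argument of $g''$ to infinity (contrast with $|\theta_\tau|\ge |v|\tau/2$ in the pulse proof), you only have the flat bound $\|h_\tau\ast g''\|_{L^\infty}\le \|g''\|_{L^\infty}$. The resulting integral is $\eta\|g''\|_{L^\infty}\,a e^{-T}\int_0^T(1-e^{-\tau})\,\diff\!\tau\sim \eta\|g''\|_{L^\infty}\,a\,T e^{-T}$, which does not beat $ae^{-T}$ for large $T$. (Your displayed inequality with the factor $\tau e^{-\tau}e^{-T}$ is not what actually comes out.) The fix is to run the bootstrap at rate $e^{-\delta t}$ for $\delta<1$: then the integral is $\frac{\eta\|g''\|_{L^\infty}a}{1-\delta}e^{-\delta T}$, and choosing $\eta_0=(1-\delta)/\|g''\|_{L^\infty}$ (depending only on $g$ and $\delta$, as required) closes it. Since the proposition only claims rate $e^{-\delta t}$, this repaired version of your argument is valid; it just doesn't give the sharper $e^{-t}$ you wrote for $\dot x_c$.
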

The constant $C_1$ above depends on the initial data $s_0$ and on $\delta$, but the threshold $\eta_0$ only depends on $g$ and $\delta$
(i.e., the exponential convergence to the stationary state is unconditional for all sufficiently "loose" collagen networks.
This first step requires a slightly more careful barrier argument than Proposition \ref{prop:stable_pulse}. The proof of part (iv) of Theorem
\ref{THM2} will the follow from the corollary below.
\begin{proof}
At first glance, we treat the problem perturbatively, and assume that $s_0(x) = \bar{s}_0(x) + z_0(x)$ and that $x_c(0) = x_0 = 0$.
Through a calculation similar to the one that found \eqref{eq:perturb}, we see that if the pair $(z(t,x),y(t))$ solves the system
\begin{equation}\label{eq:perturb_stat}
\begin{split}
&\partial_t z(t,x) + z(t,x) - \partial_{xx}^2 z(t,x) = -\dot{y}(t) \bar{s}_0'(x+y(t)) \\
&\quad\quad \dot{y}(t) = \eta \partial_x z(t,-y(t))
\end{split}
\end{equation}
then the pair $(s(t,x), x_c(t)) := \left( \bar{s}_0(x+y(t)) + z(t,x) , - y(t) \right)$ will solve \eqref{eq:mat}. Moreover, if $z_0 \in W^{k,\infty}$ and
$g \in W^{k+1,\infty}$, Proposition \ref{prop:exists} guarantees that $\| z \|_{W^{m,\infty}_x}(t) \leq M_m$ for all $m \leq k$
(and implies that $\| \bar{s}_0 \|_{W^{m\infty}} \leq \bar{M}_m$ for all $m \leq k+3$). In particular we have
\begin{equation}\label{eq:max_dist}
|y(t)| \leq \eta M_1 t .
\end{equation}
If we write $w(t,x) = \partial_x z(t,x)$, then the near-linearity of the PDE gives
\[
\partial_t w(t,x) + w(t,x) - \partial_{xx}^2 w(t,x) = -\dot{y}(t) \bar{s}_0''(x+y(t)).
\]
We proceed with a barrier argument, but different from the one in the previous section. Define, for any $\gamma \in (0,1)$,
\[
\phi^\epsilon(t,x) = Ce^{-\delta t}(1 + \epsilon x^2).
\]
For $C$ sufficiently large, we have that $w(0,x) < \phi^\epsilon(0,x)$. We claim that $w(t,x) < \phi^\epsilon(t,x)$ for all $t$.
Let $t_1 = \inf \lbrace t : w(t,x) < \phi^\epsilon(t,x) \text{ for all } x \rbrace$.
If $t_1 = \infty$, then we are done. Suppose now that $t_1 < \infty$.
By continuity, $t_1 > 0$. Since $w$ is bounded uniformly in time but $\phi^\epsilon(t,x) \rightarrow \infty$ as $|x| \rightarrow \infty$, there must
in fact exist a crossing point $x_1$ such that $w(t_1,x_1) = \phi^\epsilon(t_1,x_1)$ and $w(t,x) \leq \phi^\epsilon(t,x)$ for all $0 \leq t \leq t_1$.
Defining $W(t,x) = \phi^\epsilon(t,x)-w(t,x)$, we see that
\[
\begin{split}
&W_t + W-W_{xx} = (1-\delta-2\epsilon) C e^{-\delta t} + \epsilon (1-\delta) C e^{-\delta t} x^2 + \dot{y}(t) \bar{s}_0''(x+y(t)) , \\
&W_t(t_1,x_1) \leq 0, \ \ \ W_{xx}(t_1,x_1) \geq 0, \ \ \ W(t_1, x_1) = 0, \ \text{ and } \ \ W(t_1, x) \geq 0 .
\end{split}
\]
If we evaluate this at the crossing point $(t_1,x_1)$, then
\[
\begin{split}\label{eq:supersol}
0 &\geq W_t(t_1,x_1) \geq W_t(t_1,x_1) + 0 - W_{xx}(t_1,x_1) \\
&\geq (1-\delta-2\epsilon) C e^{-\delta t_1} + \epsilon (1-\delta) C e^{-\delta t_1} x_1^2
- \eta w(t_1, -y(t_1)) \| \bar{s}_0 \|_{W^{2,\infty}} \\
&\geq (1-\delta-2\epsilon) C e^{-\delta t_1} - \eta C e^{-\delta t_1}(1+\epsilon |y(t_1)|^2) \bar{M}_2 \\
& \geq C e^{-\delta t_1} \left( 1-\delta - 2\epsilon-\eta(1+\epsilon \eta^2 M_1^2 t_1^2) \bar{M}_2 \right).
\end{split}
\]
The inequality in the third line used that $w(t_1, -y(t_1)) \leq \phi^\epsilon(t_1, -y(t_1))$. We then observe that, in order to avoid
a contradiction, the final quantity above cannot be strictly positive. Unlike in the previous section, we will not be able to find a contradiction for
all finite $t_1$. Instead, we will prove that $t_1$ must go to infinity as $\epsilon \rightarrow 0$.\\
\\
Define $\eta_0 = (1-\delta)/(4 \bar{M}_2)$ and assume that $\eta < \eta_0$. Note that $\bar{M}_2$ is an upper bound for the stationary state,
and does not depend on either the initial data $z_0$ or $\eta$; likewise $\eta_0$ will not depend on $z_0$ (which makes the threshold uniform
for the entire problem) nor on $\eta$ (which would make the argument below circular). For any $\delta \in (0,1)$ fixed, and all $\epsilon$
small enough that $1-\delta -2\epsilon >(1-\delta)/2$, we observe that
\[
1-\delta-2\epsilon -\eta(1+\epsilon \eta^2 M_1^2 t^2)\bar{M}_2 > 0 \ \text{ for all } \ t < t_\epsilon := \frac{\epsilon^{-1/2}}{\eta M_1} .
\]
Note that $t_\epsilon$ does implicitly depend on $z_0$, through $M_1$. Regardless, we see from \eqref{eq:supersol} and the above that
any crossing point $t_1$ cannot be smaller than $t_\epsilon$. Thus
\[
w(t,x) = \partial_x z(t,x) \leq C e^{-\delta t} (1+\epsilon x^2) \ \text{ for all } \ 0 \leq t \leq t_\epsilon .
\]
Since $\epsilon>0$ can be arbitrarily close to zero and $t_\epsilon \rightarrow \infty$, we conclude that $\partial_x z(t,x) \leq C e^{-\delta t}$
for all $t$. This proves the claim.\\
\\
The argument above applies equally to $-w(t,x)$, which means $\| z \|_{W^{1, \infty}_x}(t) \leq C e^{-\delta t}$. From this we
refine \eqref{eq:max_dist} into
\begin{equation}\label{eq:max_dist_better}
|\dot{y}(t)| \leq C \eta e^{-\delta t} \ \text{ and } \ |y(t) + \bar{x}| \leq \frac{C \eta}{\delta} e^{-\delta t},
\end{equation}
for some $\bar{x} := -\int_0^\infty \dot{y}(\tau) \diff \! \tau$. For the actual inequality \eqref{eq:stable_stat}, we note that
\[
\begin{split}
\left| s_x (t,x) - \bar{s}_0' (x-\bar{x}) \right| &\leq \left| \partial_x z (t,x) \right| + \left| \bar{s}_0'(x+y(t))-\bar{s}_0'(x-\bar{x}) \right| \\
&\leq C e^{-\delta t} + \bar{M}_2 |y(t) + \bar{x}| \leq C \left( 1 + \frac{\bar{M}_2\eta}{\delta} \right) e^{-\delta t}
=: C_1 e^{-\delta t}
\end{split}
\]
\end{proof}

If we abuse notation and write $w(t,x) = \partial_x^m z(t,x)$, then we have
\[
\partial_t w(t,x) + w(t,x) - \partial_{xx}^2 w(t,x) = -\dot{y}(t) \bar{s}_0^{(m+1)}(x+y(t)).
\]
Armed now with the exponential decay of $|\dot{y}(t)|$, we can implement the same barrier argument used in the previous section
(indeed, the same reasoning that gave Corollary \ref{cor:higher_decay}) to conclude the following.

\begin{coro}\label{cor:stable_stat}
With the same assumptions as in Proposition \ref{prop:stable_stat}, the solution $(s,x_c)$ converges exponentially to a re-centered stationary
state in the strong topology. That is,
for any $\delta \in (0,1)$, there exists $C_2$, $\eta_0$, and $\bar{x}$ such that for all $\eta < \eta_0$ we have
\[
|x_c(t) - \bar{x}| \leq C_2 e^{-\gamma t} , \ \text{ and } \ \
\max_{0 \leq m \leq k} \left( \| s(t,x) - \bar{s}_0(x-\bar{x}) \|_{W^{m,\infty}_x}(t) \right) \leq C_2 e^{-\gamma t} .
\]
Here $\eta_0$ depends only on $g$ and $\delta$, but $C_2$ depends on $g$, $\delta$, and $s_0$.
\end{coro}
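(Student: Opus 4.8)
The plan is to bootstrap the first-order estimate of Proposition~\ref{prop:stable_stat} to all $x$-derivatives up to order $k$, iterating the barrier argument exactly as in the passage from Proposition~\ref{prop:stable_pulse} to Corollary~\ref{cor:higher_decay}. First I would collect what Proposition~\ref{prop:stable_stat} and its proof already provide: with $s(t,x) = \bar{s}_0(x+y(t)) + z(t,x)$ and $x_c(t) = -y(t)$ as in \eqref{eq:perturb_stat}, one has the decay $|\dot{y}(t)| \le C\eta e^{-\delta t}$ and $|y(t)+\bar{x}| \le (C\eta/\delta)e^{-\delta t}$ with $\bar{x} := -\int_0^\infty \dot{y}(\tau)\,\D\tau$, together with the time-uniform bounds $\|z\|_{W^{m,\infty}_x}(t) \le M_m$ for $m \le k$ and $\|\bar{s}_0\|_{W^{m,\infty}} \le \bar{M}_m$ for $m \le k+3$ (Proposition~\ref{prop:exists} applied to \eqref{eq:perturb_stat}, together with the ellipticity of \eqref{eq:stationary}; this is where $g \in W^{k+1,\infty}$ is used). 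Crucially, the only smallness condition on $\eta$ enters through Proposition~\ref{prop:stable_stat} itself, so $\eta_0$ will not have to be shrunk.

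Next, fix $0 \le m \le k$ and set $w := \partial_x^m z$. Differentiating \eqref{eq:perturb_stat} $m$ times in $x$ shows that $w$ solves
\[
\partial_t w + w - \partial_{xx}^2 w = -\dot{y}(t)\,\bar{s}_0^{(m+1)}(x+y(t)),
\]
whose right-hand side is now a \emph{known} quantity bounded pointwise by $C\eta\bar{M}_{m+1}e^{-\delta t}$. Against this forcing I would run the barrier argument of Proposition~\ref{prop:stable_pulse}: for $\epsilon>0$ put $\phi^\epsilon(t,x) := C_2 e^{-\gamma t}(1+\epsilon x^2)$, where $\gamma$ is the rate in the statement (which may be taken equal to the $\delta$ of Proposition~\ref{prop:stable_stat}). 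A direct computation gives $(\partial_t + 1 - \partial_{xx}^2)\phi^\epsilon = C_2 e^{-\gamma t}[(1-\gamma)(1+\epsilon x^2)-2\epsilon]$. Since $w$ is bounded uniformly in time by $M_m$ while $\phi^\epsilon \to +\infty$ as $|x|\to\infty$, if $w < \phi^\epsilon$ were to fail there would be a first crossing time $t_1>0$ and a \emph{finite} crossing point $x_1$ at which $W := \phi^\epsilon - w$ obeys $W(t_1,x_1)=0$, $\partial_t W(t_1,x_1)\le 0$, $\partial_{xx}^2 W(t_1,x_1)\ge 0$. Evaluating the equation for $W$ there and choosing $\epsilon$ small (so $(1-\gamma)-2\epsilon \ge (1-\gamma)/2$) and $C_2$ large (so $C_2(1-\gamma)/2 > C\eta\bar{M}_{m+1}$ and $w(0,\cdot)<\phi^\epsilon(0,\cdot)$) makes $\partial_t W(t_1,x_1) - \partial_{xx}^2 W(t_1,x_1) > 0$, a contradiction. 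Running the same argument for $-w$ and letting $\epsilon\to 0$ yields $\|\partial_x^m z\|_{L^\infty_x}(t) \le C_2 e^{-\gamma t}$; as $0\le m \le k$ ranges over finitely many values, one $C_2$ serves all of them (for $m\le 1$ this recovers Proposition~\ref{prop:stable_stat}).

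It remains to assemble the statement. From $x_c(t) = -y(t)$ and the definition of $\bar{x}$, $x_c(t)-\bar{x} = \int_t^\infty \dot{y}(\tau)\,\D\tau$, so $|x_c(t)-\bar{x}| \le (C\eta/\delta)e^{-\delta t} \le C_2 e^{-\gamma t}$. For the profile, $s(t,x) = \bar{s}_0(x+y(t)) + z(t,x)$ gives, for each $0\le m \le k$ and by the mean value theorem,
\[
\big|\partial_x^m s(t,x) - \bar{s}_0^{(m)}(x-\bar{x})\big| \le \big|\partial_x^m z(t,x)\big| + \bar{M}_{m+1}\,|y(t)+\bar{x}| \le C_2 e^{-\gamma t} + \bar{M}_{m+1}\frac{C\eta}{\delta}e^{-\delta t},
\]
which after enlarging $C_2$ once more is $\le C_2 e^{-\gamma t}$; combining the $L^\infty_x$ and top-order pieces gives the claimed $W^{m,\infty}_x$ bound for every $m\le k$. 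The threshold $\eta_0$ is inherited unchanged from Proposition~\ref{prop:stable_stat}, hence depends only on $g$ and $\delta$, while $C_2$ carries the dependence on $s_0$. The only point requiring care is purely bookkeeping: one must check that the finitely many barrier constants can be chosen simultaneously and that no step reintroduces a smallness assumption on $\eta$; there is no analytic obstacle beyond what was already dealt with in Proposition~\ref{prop:stable_stat} and Corollary~\ref{cor:higher_decay}.
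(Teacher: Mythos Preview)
Your proposal is correct and follows exactly the route the paper indicates: differentiate \eqref{eq:perturb_stat} $m$ times, use the exponential decay of $|\dot y|$ already obtained in Proposition~\ref{prop:stable_stat} to treat the right-hand side as a known forcing, and rerun the barrier/comparison argument as in Corollary~\ref{cor:higher_decay}. The paper's own ``proof'' is a single sentence pointing to this mechanism, so you have in fact supplied more detail than the paper does; the only cosmetic difference is that the paper refers to the barrier $\phi^\epsilon = C' e^{-\delta t} + \epsilon(x^2+2)$ of Proposition~\ref{prop:stable_pulse}, whereas you use the multiplicative form $C_2 e^{-\gamma t}(1+\epsilon x^2)$ from Proposition~\ref{prop:stable_stat}, but either choice works once $|\dot y|$ is known to decay.
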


\section*{Acknowledgments}
The first author was partially funded by NSF grants DMS-2108209 and DMS-2408163.

\bibliographystyle{plain}
\bibliography{MeunierTarfulea}

\end{document}